\shorttitle{Approximate sampling formulas for general mutation models} 
\def\math#1{\mathchoice
        {\mbox{\boldmath$#1$}}%
        {\mbox{\boldmath$#1$}}%
        {\mbox{\boldmath$\scriptstyle#1$}}%
        {\mbox{\boldmath$\scriptscriptstyle#1$}}}%
\def\e{\math{e}}
\def\m{\math{m}}
\def\n{\math{n}}
\def\bfP{\math{P}}
\def\bfPhat{\math{\widehat{P}}}
\def\bfzero{\math{0}}
\def\bfpihat{\math{\widehat{\pi}}}
\def\fall#1#2{(#1)_{#2\downarrow}}
\def\rise#1#2{(#1)_{#2\uparrow}}
 \def\qt{Q}
 \def\rt{R}
\newcommand{\bo}[1]{\ensuremath \boldsymbol{#1}}
\def\m{\bo{m}}
\def\n{\bo{n}}
\def\e{\bo{e}}
\def\t{\bo{t}}
\def\O{{|\mathcal{O}_{\n}|}}
\def\Oset{{\mathcal{O}_{\n}}}
\def\Om{{|\mathcal{O}_{\m}|}}
\def\C{\mathcal{C}}
\def\H{\mathcal{H}}
\def\F{\mathcal{F}}
\def\I{\mathbb{I}}
\def\bbZ{\mathbb{Z}}
\def\bbE{\mathbb{E}}
\def\bbP{\mathbb{P}}
\def\Pn{\mathbb{P}_{\n}}
\def\En{\mathbb{E}_{\n}}
\DeclareMathOperator*{\err}{\text{Err}}
\def\qapprox{q_{\text{approx}}}
\DeclareMathOperator*{\exerr}{\text{AvgErr}}
\DeclareMathOperator*{\worsterr}{\text{WorstErr}}
\newcommand{\sref}[1]{Section~\ref{#1}}
\newcommand{\factref}[1]{Fact~\ref{#1}}
\newcommand{\cref}[1]{Chapter~\ref{#1}}
\newcommand{\propref}[1]{Proposition~\ref{#1}}
\newcommand{\thmref}[1]{Theorem~\ref{#1}}
\newcommand{\corref}[1]{Corollary~\ref{#1}}
\newtheorem{fact}{Fact}
\def\qed{\ensuremath{\Box}}
\def\revisedv2#1{{\bf #1}}
\newtheorem{corrol}[theorem]{Corollary}
\newtheorem{propos}[theorem]{Proposition}
\newcommand{\emailthree}[1]{\footnote{\hspace*{-14pt}$^{\dagger}\,$Email address: #1}\par}
\newcommand{\authoronetwoemail}[2][]{\hspace*{9pt}{\small\textrm{\uppercase{#2}},$^{*,**,\dagger}$ \textit{#1}}\par}
\begin{document}
\title{Approximate sampling formulae for\\ general finite-alleles models of mutation}
 
\authorone[University of California, Berkeley]{Anand Bhaskar}
\authortwo[University of California, Berkeley]{John A. Kamm}
\authoronetwoemail[University of California, Berkeley]{Yun S. Song}

\addressone{Computer Science Division, University of California, Berkeley, CA 94720, USA.}
\addresstwo{Department of Statistics,  University of California, Berkeley, CA 94720, USA.}
\emailthree{yss@stat.berkeley.edu}

\begin{abstract}	
Many applications in genetic analyses utilize sampling distributions, which describe the probability of observing a sample of DNA sequences randomly drawn from a population.  In the one-locus case with special models of mutation such as the infinite-alleles model or the finite-alleles parent-independent mutation  model, closed-form  sampling distributions under the coalescent have been known for many decades.  However, no exact formula is currently known for more general models of mutation that are of biological interest.  
In this paper, models with finitely-many alleles are considered, and an urn construction related to the coalescent is used to derive approximate closed-form sampling formulas for an arbitrary irreducible recurrent mutation model or for a reversible recurrent mutation model, depending on whether the number of distinct observed allele types is at most three or four, respectively.
It is demonstrated empirically that the formulas derived here are highly accurate when the per-base mutation rate is low, which holds for many biological organisms.
\end{abstract}

\keywords{Sampling probability; coalescent theory; urn models; martingale}
\ams{92D15}{65C50, 92D10, 41A58}

\section{Introduction}

An important problem in genetic analyses concerns computing the probability of observing a randomly drawn sample of chromosomes under a given model of evolution.  Popular applications of this probability computation include maximum likelihood estimation of model parameters and ancestral inference (see \cite{ste:2001} for a nice introduction).  The coalescent \cite{kin:1982:SPA,kin:1982:JAP} is a useful mathematical framework for performing model-based full-likelihood analyses, but in most cases it is intractable to obtain a closed-form formula for the probability of a given dataset.  A well-known exception to this complication is the celebrated Ewens sampling formula (ESF)~\cite{ewe:1972}, which describes the stationary probability distribution of a sample configuration under the one-locus \emph{infinite}-alleles model in the coalescent or the diffusion limit.  A P\'olya-like urn model interpretation~\cite{hop:1984} of the formula has been known for some time, and recently a new combinatorial proof of the ESF has been provided \cite{gri:les:2005}. Furthermore, the ESF also arises in several interesting contexts outside biology, including random partition structures; the ESF is a special case of the two-parameter sampling formula \cite{pitman:1992,pitman:1995} for exchangeable random partitions.  See \cite{ABT:book} for examples of other interesting combinatorial connections.

In the case of \emph{finitely}-many alleles, a closed-form sampling formula is known \cite{wri:1949} only for the parent-independent mutation (PIM) model, in which the probability of mutating from allele $j$ to allele $i$  depends only on the child allele $i$.  For a general non-PIM mutation model, finding an exact, closed-form sampling formula has remained a challenging open problem.

In this paper, we make progress on this problem by deriving approximate, closed-form sampling formulas that are highly accurate when the mutation rate is low.  
More precisely, given a sample configuration $\n$ and the model parameters (mutation rate $\theta$ and transition matrix $\bfP$), we consider the Taylor expansion of the sampling probability $q(\n\mid\theta,\bfP)$ about $\theta=0$.  As discussed later, if $\bfP$ is irreducible when restricted to the observed alleles in the sample, then the leading order term in the expansion is proportional to $\theta^{\O-1}$, where $\O$ is the number of distinct observed alleles in the sample configuration $\n$.  Hence, 
\begin{align} \label{eq:q_expansion}
q(\n \mid \theta, \bfP) = \theta^{\O-1} \qt(\n \mid \bfP) + O(\theta^{\O}),
\end{align}
where $\qt(\n \mid \bfP)$ is the leading order coefficient that depends on the mutation transition matrix $\bfP$ but not on the mutation rate $\theta$.
In this paper, we consider the problem of obtaining exact closed-form formulas for $\qt(\n \mid \bfP)$.
As many organisms typically have small per-base mutation rates, our results are of biological interest.

By restricting the set of events in the coalescent genealogy for a given sample, Jenkins and Song \cite{jen:son:2011:tpb} provided closed-form formulas for $\qt(\n \mid \bfP)$ for an arbitrary transition matrix $\bfP$ when $\O \leq 3$. In this paper, we provide new proofs of those results, and extend them by supplying a closed-form formula for $\qt(\n \mid \bfP)$ when $\O = 4$ and the transition matrix $\bfP$ is reversible restricted to the observed alleles. 
We prove our results using martingale arguments and use an urn construction related to the coalescent to develop a recursion for the approximate sampling probability, which can then be solved in closed-form using combinatorial techniques.
As a corollary of our results, it can be seen that the simple general formula in \cite[Theorem 6.3]{jen:son:2011:tpb} for $\qt(\n \mid \bfP)$ when $\bfP$ is parent-independent restricted to the observed alleles also holds when $\bfP$ is reversible restricted to the observed alleles, provided that $\O \leq 3$. That formula fails to hold when $\O = 4$ and $\bfP$ is not parent-independent restricted to the observed alleles.

As there are four distinct DNA bases, our extension to the $\O=4$ case seems natural.  A more interesting reason is as follows:  In multi-locus models with finite recombination rates, no closed-form sampling formula is known, even for the simplest case of two loci with either infinite-alleles or finite-alleles PIM models.  However, recently a new framework  based on asymptotic series has  been developed \cite{jen:son:2009:G, jen:son:2010, jen:son:2011:aap,bha:son:2011} to derive useful closed-form results when the recombination rate is moderate to large.  The main idea behind that research is to perform an asymptotic expansion of the sampling probability in inverse powers of the recombination rate.  We note that our one-locus sampling formula for the $\O=4$ case provides an accurate approximation of the sampling probability for a completely linked (i.e., with zero recombination rate) pair of loci with two observed alleles at each locus (as is typical in single-nucleotide polymorphism data).  Hence, 
our work serves as a starting point for finding approximate two-locus sampling formulas when the recombination rate is small,
complementary to the earlier work \cite{jen:son:2009:G, jen:son:2010, jen:son:2011:aap,bha:son:2011} for large recombination rates.  We leave this problem for future research.

We remark that, for a given sample configuration $\n$ and fixed parameters $\theta$ and $\bfP$, the exact sampling probability $q(\n\mid\theta,\bfP)$ can be found numerically by solving a system of coupled linear equations in $O(|\n|^K)$ variables, where $|\n|$ denotes the total sample size and $K$ denotes the number of allele types in the assumed model.  One of the main motivations of our work is to remedy this high computational complexity.  Evaluating our closed-form approximations is much more efficient, in both time and space complexity.

The rest of this paper is structured as follows. In \sref{sec:model}, we lay out the model and notation used throughout the paper.  In \sref{sec:main_results}, we summarize our main closed-form sampling formulas, which we prove in \sref{sec:urn_proc} using martingale arguments and an urn construction.
Numerical experiments demonstrating the usefulness of our approximate sampling formulas are provided in \sref{sec:numerical_accuracy}.

\section{Model and notation \label{sec:model}}

We consider Kingman's coalescent with a $K$-allelic recurrent mutation model specified by the population-scaled mutation rate $\theta/2$ and ergodic transition matrix $\bfP$, where $P_{ji}$ denotes the probability of allele $j$ mutating to allele $i$ forward in time given that a mutation occurs. The stationary distribution of $\bfP$ is denoted by $\bo{\pi}=(\pi_1,\ldots,\pi_K)$.

The following definitions will be used throughout:

\begin{definition}[$\n$, sample configuration]
A sample of individuals is denoted by $\n = (n_i)_{i \in [K]}$, where $n_i\in \bbZ_{\geq 0}$ denotes the number of individuals in the sample with allele $i$. The size $|\n|$ of the sample $\n$ is denoted by the same letter in non-bold-face,  $n$. 
For notational convenience, we use $\e_i$ to denote the sample configuration with a single individual of type $i$ and write $\n = n_1 \e_1 + \cdots + n_K \e_K$.   
For a subset $S \subseteq [K]$, we define $\n_S = \sum_{i \in S} n_i \e_i$ and $n_S = |\n_S|$.
\end{definition}

\begin{definition}[$\Oset$, observed allele types]
Given a sample $\n$, let $\Oset\subseteq[K]$ denote the set of observed allele types; i.e., $\Oset = \{i\in [K] \mid n_i > 0\}$.   The number of observed allele types is denoted by $\O$.
\end{definition}

\noindent
When the indices $h, i, j, k$ and $l$ are used in indefinite summations or products, they are assumed to range over $\Oset$, unless stated  otherwise.

By exchangeability, the probability of any \emph{ordered} sample with configuration $\n$ is invariant under all permutations of the sampling order.  We use $q(\n\mid \theta,\bfP)$ to denote the stationary sampling probability of any particular ordered sample with configuration $\n$.
From the standard coalescent arguments \cite{gri:tav:1994a,gri:tav:1994b}, it can be deduced that  $q(\n \mid \theta,\bfP)$ is the unique solution to the recursion
\begin{equation}
	n(n-1+\theta) q(\n\mid \theta,\bfP) = \sum_{i} n_i (n_i-1) q(\n-\e_i\mid \theta,\bfP) + \theta \sum_{i,j}\, P_{ji}\, n_i\, q(\n-\e_i+\e_j\mid \theta,\bfP),
	\label{eq:main_rec}
\end{equation}
with boundary conditions
\begin{equation}
q(\e_i \mid \theta,\bfP) = \pi_i, \text{ for all $i \in [K]$.}
\end{equation}
If $\bfP$ is irreducible when restricted to the observed alleles $\Oset$, then by unwinding recursion \eqref{eq:main_rec}, it can be seen that $\O-1$ is the smallest power of $\theta$ with a non-vanishing coefficient in the Taylor series expansion of $q(\n \mid \theta, \bfP)$ about $\theta=0$. 
Intuitively, for a sample with $m$ distinct observed alleles, the coefficient of $\theta^{m-1}$ in the Taylor expansion corresponds to the total probability of coalescent genealogies with the most parsimonious number (i.e., $m - 1$) of mutations.
That $\bfP$ is irreducible when restricted to $\Oset$ is a sufficient (but not necessary) condition for the existence of such a parsimonious genealogy for sample $\n$.

Letting $\qt(\n \mid \bfP)$ denote the coefficient of $\theta^{\O-1}$ in the Taylor expansion, $q(\n \mid \theta, \bfP)$ can be written as in
\eqref{eq:q_expansion}.
For simplicity, in what follows we simply write $q(\n)$ and $\qt(\n)$ instead of $q(\n\mid\theta,\bfP)$ and $\qt(\n\mid\bfP)$, respectively.

We now introduce some notation used throughout the paper.
For a sample configuration $\n$, we define the combinatorial quantity $\Lambda(\n)$ as 
\begin{equation}
\Lambda(\n) = {\prod_{i\in\Oset} (n_i - 1)! \over (n-1)!}.
\label{eq:Lambda}
\end{equation}
For $k \in \bbZ_{\geq 0}$, the $k$th \emph{falling} factorial of $x$ (denoted $\fall{x}{k}$) and the $k$th \emph{rising} factorial of $x$ (denoted $\rise{x}{k}$) are defined as
\begin{align*}
	\fall{x}{k} {}={}& x(x-1)\cdots (x-k+1),\\
	\rise{x}{k} {}={}& x(x+1)\cdots (x+k-1),
\end{align*}
with $\fall{x}{0}=\rise{x}{0}=1$.
The $k$th harmonic number $H_k$ is defined as
\[
H_k = 1 + \frac{1}{2} + \cdots + \frac{1}{k},
\]
with $H_0 = 0$.
Given a sample configuration $\n=(n_1,\ldots,n_K)$, 
a $K$-tuple $\m=(m_1,\ldots,m_K)$ satisfying
$\bfzero\preceq\m\prec\n$ means $0\leq m_i < n_i$ for all $i \in \Oset$ and $m_i=0$ for all $i \notin \Oset$, while
$\bfzero\prec\m\preceq\n$ means $0< m_i \leq n_i$ for all $i \in \Oset$ and $m_i=0$ for all $i \notin \Oset$. Also,
$\bfzero\preceq\m\preceq\n$ denotes $0 \leq m_i \leq n_i$ for all $i \in [K]$.

\section{A summary of closed-form results for $\qt(\n)$ \label{sec:main_results}}
In the case of $\O=1$, it is easy to see that $\qt(\n)=\pi_i$ for $\n = n \e_i$.
In this paper, we derive closed-form expressions for the leading order coefficient $\qt(\n)$ when $\O \leq 3$ and $\bfP$ is an arbitrary mutation transition matrix that is irreducible when restricted to the observed alleles $\Oset$; and also when $\O = 4$, and $\bfP$ is irreducible and reversible when restricted to  $\Oset$ (i.e., $\pi_i P_{ij} = \pi_j P_{ji}$ for all $i, j \in \Oset$). 
These closed-form results are summarized below.


\begin{theorem}\label{thm:O2}
	For $\O = 2$ and  $\bfP$ an arbitrary mutation transition matrix that is irreducible when restricted to $\Oset$, $\qt(\n)$ is given by 
	\[
	\qt(\n) = \Lambda(\n) \sum_{i, j\in \Oset: i \neq j} {n_j \over n} \pi_j P_{ji}. 
	\]
\end{theorem}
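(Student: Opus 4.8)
The plan is to convert the defining recursion \eqref{eq:main_rec} into a self-contained recursion for the leading coefficient $\qt(\n)$ in the $\O=2$ case, and then solve it in closed form. Fix $\n$ with $\Oset=\{a,b\}$ and write $n=|\n|$; since $\O=2$, the expansion \eqref{eq:q_expansion} reads $q(\n)=\theta\,\qt(\n)+O(\theta^2)$, so I would substitute \eqref{eq:q_expansion} into \eqref{eq:main_rec} and equate coefficients of $\theta^1$. On the left, $n(n-1+\theta)q(\n)$ contributes only $n(n-1)\qt(\n)$. In $\sum_i n_i(n_i-1)q(\n-\e_i)$, each $\n-\e_i$ with $n_i\ge2$ still has the two observed types $a,b$, so $q(\n-\e_i)=\theta\,\qt(\n-\e_i)+O(\theta^2)$, while the $n_i=1$ summands are killed by $n_i(n_i-1)$; this term thus contributes $\sum_i n_i(n_i-1)\,\qt(\n-\e_i)$. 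The mutation term $\theta\sum_{i,j}P_{ji}n_i\,q(\n-\e_i+\e_j)$ already carries a factor $\theta$, so only the value of $q(\n-\e_i+\e_j)$ at $\theta=0$ enters; that value is $0$ unless $\n-\e_i+\e_j$ has a single observed type, which — given that $\n$ has exactly two — forces $n_i=1$ with $j$ the other observed type, in which case the value is $\pi_j$ by the $\O=1$ fact ($\qt(m\e_j)=\pi_j$). Terms with $j\notin\Oset$ merely create additional observed types and so enter at order $\theta^2$. Collecting, for every $\n$ with $\Oset=\{a,b\}$ and $n\ge2$,
\[
n(n-1)\,\qt(\n)=\sum_i n_i(n_i-1)\,\qt(\n-\e_i)+\I[n_a=1]\,\pi_bP_{ba}+\I[n_b=1]\,\pi_aP_{ab}.
\]
The right-hand side involves $\qt$ only at strictly smaller two-type configurations plus known quantities, and at $n=2$ it directly gives $\qt(\e_a+\e_b)=\tfrac{1}{2}(\pi_bP_{ba}+\pi_aP_{ab})$, so this recursion has a unique solution; it suffices to exhibit one.

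I would next clear the prefactor by setting $\qt(\n)=\Lambda(\n)\,r(\n)$ and using the identities $\Lambda(\n-\e_i)=\tfrac{n-1}{n_i-1}\Lambda(\n)$ for $n_i\ge2$ (hence $n_i(n_i-1)\Lambda(\n-\e_i)=n_i(n-1)\Lambda(\n)$) and $\Lambda(\n)=\tfrac{1}{n-1}$ whenever some $n_i=1$. Dividing the displayed recursion through by $(n-1)\Lambda(\n)$ collapses it to the averaging recursion
\[
n\,r(\n)=\sum_{i:\,n_i\ge2} n_i\,r(\n-\e_i)+\I[n_a=1]\,\pi_bP_{ba}+\I[n_b=1]\,\pi_aP_{ab}.
\]
Then I would check that $r(\n)=\sum_{i,j\in\Oset:\,i\ne j}\tfrac{n_j}{n}\pi_jP_{ji}=\tfrac{n_a}{n}\pi_aP_{ab}+\tfrac{n_b}{n}\pi_bP_{ba}$ solves it: in the generic case $n_a,n_b\ge2$, the coefficient of $\pi_aP_{ab}$ on the right is $n_a\tfrac{n_a-1}{n-1}+n_b\tfrac{n_a}{n-1}=\tfrac{n_a(n-1)}{n-1}=n_a$ and, symmetrically, that of $\pi_bP_{ba}$ is $n_b$, so the right-hand side equals $n_a\pi_aP_{ab}+n_b\pi_bP_{ba}=n\,r(\n)$; the two singleton configurations and the base case $n=2$ are verified directly and also match. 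By the uniqueness above, $\qt(\n)=\Lambda(\n)\,r(\n)=\Lambda(\n)\sum_{i\ne j}\tfrac{n_j}{n}\pi_jP_{ji}$, as claimed.

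I expect the only delicate step to be the first one: correctly isolating the order-$\theta^1$ part of the mutation term $\theta\sum_{i,j}P_{ji}n_i\,q(\n-\e_i+\e_j)$ — recognizing that it contributes only through those configurations that have collapsed to one observed type, and being careful that letting $j$ range over all of $[K]$ rather than $\Oset$ adds nothing. The hypothesis that $\bfP$ is irreducible on $\Oset$ is used only to justify that \eqref{eq:q_expansion} holds with leading power exactly $\theta^{\O-1}$, so that $\qt(\n)$ is well defined as the coefficient being computed; everything else is purely algebraic. All steps after the reduced recursion are routine, and the substitution $\qt=\Lambda r$ is exactly what makes the closed form drop out — the recursion for $r$ also has a transparent probabilistic reading (remove a uniformly random lineage, scoring $\pi_bP_{ba}$ or $\pi_aP_{ab}$ at the step that erases the minority type), which is presumably the urn construction the paper develops, though for $\O=2$ the bare induction suffices.
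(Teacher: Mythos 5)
Your proof is correct, but it takes a genuinely different route from the paper. The paper proves \thmref{thm:O2} via its urn construction: it first establishes \thmref{treesum}, writing $\rt(\n)=\qt(\n)/\Lambda(\n)$ as $\sum_T \Pn(T) f_{\bfP}(T)$ over rooted trees on $\Oset$, and then computes the single tree probability for $\O=2$ by an optional-stopping argument — the fraction $X_t$ of color-$a$ balls is a martingale, so $\Pn(T)=\bbE[X_\tau]=n_a/n$ at the first killing time $\tau$. You instead extract the order-$\theta$ recursion for $\qt(\n)$ directly from \eqref{eq:main_rec} (correctly noting that only configurations collapsed to one observed type contribute through the mutation term, and that $j\notin\Oset$ enters only at order $\theta^2$), normalize by $\Lambda(\n)$, and verify the closed form by induction on $n$ with a uniqueness argument; your normalized recursion is precisely the paper's \eqref{eq:rt_recurs} specialized to $\O=2$, and your averaging identity $n\,r(\n)=\sum_i n_i\,r(\n-\e_i)+\cdots$ is the martingale property in algebraic disguise, as you yourself observe. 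What each approach buys: yours is shorter and entirely self-contained for $\O=2$, needing no probabilistic apparatus; the paper's detour through trees and martingales is what scales — the same machinery immediately yields \corref{cor:O3reversible} and feeds the recursion of \propref{prop:rt_recursion} used for the $\O=3$ and $\O=4$ formulas, whereas a bare guess-and-check would be much harder to carry out there.
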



\begin{theorem}\label{thm:O3}
	For $\O = 3$ and $\bfP$ an arbitrary mutation transition matrix that is irreducible when restricted to $\Oset$, $\qt(\n)$ is given by 
	\begin{align*}
	\qt(\n) {}={}& \Lambda(\n) \sum_{\text{\rm distinct}\, i, j, k \in\Oset} \Bigg\{ \pi_j P_{ji} P_{jk} \bigg[ {\fall{n_j}{2} \over n (n_j + n_k - 1)} - {n_i n_j \over n (n_i + n_k)} - 2 {n_i n_j n_k \over n \fall{n_j + n_k}{2}}  \\
			& \phantom{\Lambda(\n) \left[ \sum_{i, j, k\, \text{\rm distinct}} \pi_j P_{ji} P_{jk} \right.}  + 2 {n_i n_j n_k \over \fall{n_j + n_k + 1}{3}} (H_n - H_{n_i - 1}) \bigg] \\
	        & \phantom{\Lambda(\n) + \sum_{\text{\rm distinct}\, i, j, k \in\Oset}} + \pi_k P_{kj} P_{ji} \bigg[ {n_j n_k \over n (n_j + n_k - 1)} + 2 {n_i n_j n_k \over n \fall{n_j + n_k}{2}}  \\
			& \phantom{\Lambda(\n) \left[ \sum_{i, j, k\, \text{\rm distinct}} \pi_k P_{kj} P_{ji} \right.}  - 2 {n_i n_j n_k \over \fall{n_j + n_k + 1}{3}} (H_n - H_{n_i - 1}) \bigg] \Bigg\}.
	\end{align*}
\end{theorem}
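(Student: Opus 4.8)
The plan is to turn \eqref{eq:main_rec} into a recursion for the coefficient $\qt(\n)$ itself and then solve it. Substituting \eqref{eq:q_expansion} into \eqref{eq:main_rec} with $\O = 3$ and collecting the coefficient of $\theta^{2}$: on the left-hand side only $n(n-1)\qt(\n)$ survives, since by \eqref{eq:q_expansion} the $\theta^{1}$ coefficient of $q(\n)$ vanishes; on the right-hand side, the term $n_i(n_i-1)q(\n-\e_i)$ is nonzero only when $n_i \ge 2$, in which case $\n-\e_i$ still has three observed types and contributes $\theta^{2}\,n_i(n_i-1)\qt(\n-\e_i)$, while the mutation term $\theta P_{ji}n_i\,q(\n-\e_i+\e_j)$ reaches order $\theta^{2}$ exactly when $i,j \in \Oset$, $j \ne i$, and $n_i = 1$, so that $\n-\e_i+\e_j$ has only two observed types and $q$ of it begins at order $\theta$. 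This gives the closed recursion
\[
n(n-1)\,\qt(\n) \;=\; \sum_{i} n_i(n_i-1)\,\qt(\n-\e_i) \;+\; \sum_{i:\, n_i=1}\ \sum_{j \ne i} P_{ji}\,\qt(\n-\e_i+\e_j),
\]
whose right-hand side involves only three-type configurations of strictly smaller total size and two-type configurations, the latter evaluated by \thmref{thm:O2}. Inducting on $n$ (base case $\n=\e_i+\e_j+\e_k$) shows this recursion has a unique solution, so it suffices to exhibit a formula satisfying it.

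Next I would pin down the shape of the solution. Because the recursion is linear, the two-type boundary values of \thmref{thm:O2} are linear in the weights $\pi_b P_{ba}$, and every mutation step $\n \mapsto \n-\e_i+\e_j$ both introduces a factor $P_{ji}$ and immediately drops the configuration to two types, any solution must have the form $\qt(\n) = \sum_{\text{distinct}\ i,j,k}\big(a_{ijk}(\n)\,\pi_j P_{ji}P_{jk} + b_{ijk}(\n)\,\pi_k P_{kj}P_{ji}\big)$, the two products corresponding to the only two parsimonious two-mutation histories on three observed types --- a \emph{star}, with ancestral type $j$ mutating to both $i$ and $k$, and a \emph{chain} $k \to j \to i$. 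Plugging this ansatz in, the $\pi$ and $\bfP$ factors cancel and one is left with purely combinatorial recursions for $a_{ijk}$ and $b_{ijk}$. These I would solve using the urn construction underlying \eqref{eq:main_rec}: $\qt(\n)$ is a weighted sum over trajectories of a sub-Markovian death chain on configurations that either coalesces two like-typed lineages (rate proportional to $n_h(n_h-1)$) or, once a mutant type is a singleton, removes it, the chain being absorbed the first time it hits two types. The coefficients $a_{ijk}(\n),b_{ijk}(\n)$ are then the total trajectory weights, grouped by the exit move, times the two-type values from \thmref{thm:O2}; alternatively, and more efficiently, one obtains them by applying optional stopping to a martingale built from a linear/product functional of the chain's state. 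The factors $1/(\ell(\ell-1))$ and $1/\ell$ accumulated as the lineage count $\ell$ decreases along the trajectory are precisely what produce the prefactor $\Lambda(\n)$, the falling-factorial denominators, and the harmonic-number differences $H_n - H_{n_i-1}$ in the statement.

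I expect the main obstacle to be exactly this last step: evaluating the combinatorial trajectory sums in closed form. The recursions for $a_{ijk}$ and $b_{ijk}$ interleave ``coalesce a duplicate'' moves, which change a single coordinate, with ``undo a mutation'' moves, which are available only while a coordinate equals one and which link different coordinates, so the trajectory sums do not factor and one must handle carefully the boundary where a type first becomes a singleton. In practice I would guess the closed forms --- guided by the $\O = 2$ formula and by small cases such as $\n = \e_i+\e_j+\e_k$ --- and verify them by substitution into the recursion above (using \thmref{thm:O2} for the two-type terms), collapsing the resulting sums with falling-factorial and telescoping harmonic-number identities; the asymmetry of the final expression in $i$ versus $\{j,k\}$ is a sign that the verification must respect the order in which the two mutations are undone rather than merely which types are involved. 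Reassembling $a_{ijk}(\n)\,\pi_j P_{ji}P_{jk} + b_{ijk}(\n)\,\pi_k P_{kj}P_{ji}$ and pulling out the common factor $\Lambda(\n)$ then yields the claimed formula.
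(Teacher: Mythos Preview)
Your proposal is essentially correct and shares the paper's high-level strategy: extract from \eqref{eq:main_rec} a recursion for the leading coefficient, decompose the solution according to the two rooted tree topologies on three vertices (your ``star'' and ``chain'' are exactly the paper's $f_{\bfP}(T)$ in \thmref{treesum}), and then evaluate the purely combinatorial coefficients. The difference lies in how that last step is organized. You work with the one-step recursion and propose to handle the resulting trajectory sums by guess-and-verify or by an unspecified martingale argument. The paper instead first proves \propref{prop:rt_recursion}, which conditions on the configuration $\m$ at the \emph{first} killing time in the urn process; this collapses the entire trajectory sum into a single sum over $\m$ with hypergeometric weights $\binom{\n}{\m}\big/\binom{n}{m}$, into which the $\O=2$ formula from \thmref{thm:O2} is substituted directly. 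The resulting sums $\alpha(\n,i,j,k)$ and $\beta(\n,i,j,k)$ are then evaluated in closed form using restricted hypergeometric moments (\factref{fact:restricted_moment}) together with two binomial--harmonic identities (Facts~\ref{fact:ratio} and \ref{fact:ratio2}), which is where the falling factorials and the $H_n - H_{n_i-1}$ terms come from. Your route would work, especially the verification-by-substitution option, but the first-killing-time recursion is what turns the evaluation into a systematic computation rather than an inspired guess.
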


\begin{corrol}\label{cor:O3reversible}
	Suppose $\O=3$ with sample configuration $\n=n_a \e_a + n_b \e_b + n_c \e_c$, where $a,b,c$ are distinct alleles in $[K]$.  If the mutation transition matrix $\bfP$ is reversible and irreducible when restricted to the observed alleles $\Oset$, $\qt(\n)$ is given by
	\[
	\qt(\n) = \Lambda(\n) \left( {n_a \over n} \pi_a P_{ab} P_{ac} + {n_b \over n} \pi_b P_{ba} P_{bc} + {n_c \over n} \pi_c P_{ca} P_{cb} \right).
	\]
\end{corrol}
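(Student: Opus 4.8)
The plan is to obtain the stated identity directly from Theorem~\ref{thm:O3}, letting the reversibility hypothesis force the unwieldy terms to cancel. Write $\Oset=\{a,b,c\}$ and recall that the sum in Theorem~\ref{thm:O3} ranges over the six ordered triples $(i,j,k)$ of distinct observed alleles; within each summand there is a ``$\pi_j P_{ji}P_{jk}$''-term (with $j$ playing the role of an ancestral allele mutating to both $i$ and $k$) and a ``$\pi_k P_{kj}P_{ji}$''-term (a chain $k\to j\to i$). The first step is to apply reversibility in the form $\pi_k P_{kj}=\pi_j P_{jk}$ to rewrite $\pi_k P_{kj}P_{ji}=\pi_j P_{ji}P_{jk}$, so that the two bracketed rational functions in a given summand now multiply the \emph{same} prefactor $\pi_j P_{ji}P_{jk}$ and can be combined.

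When the two brackets are added, the two occurrences of $2\,n_i n_j n_k/\bigl(n\fall{n_j+n_k}{2}\bigr)$ (which enter with opposite signs) cancel and, crucially, so do the two occurrences of $2\,n_i n_j n_k\,(H_n-H_{n_i-1})/\fall{n_j+n_k+1}{3}$; what remains is
\[
\frac{\fall{n_j}{2}+n_j n_k}{n\,(n_j+n_k-1)}-\frac{n_i n_j}{n\,(n_i+n_k)}.
\]
Since $\fall{n_j}{2}+n_j n_k=n_j(n_j+n_k-1)$, the first fraction is just $n_j/n$, and the expression collapses to $n_j n_k/\bigl(n(n_i+n_k)\bigr)$. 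Hence, after specialization,
\[
\qt(\n)=\Lambda(\n)\sum_{\text{\rm distinct}\,i,j,k\in\Oset}\pi_j P_{ji}P_{jk}\,\frac{n_j n_k}{n\,(n_i+n_k)}.
\]

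The last step pairs up the triples $(i,j,k)$ and $(k,j,i)$, which share the prefactor $\pi_j P_{ji}P_{jk}$ (symmetric in $i$ and $k$) and together contribute $n_j n_k/\bigl(n(n_i+n_k)\bigr)+n_j n_i/\bigl(n(n_i+n_k)\bigr)=n_j/n$. Summing over the three choices of the ancestral allele $j\in\{a,b,c\}$ then yields exactly $\Lambda(\n)\bigl(\tfrac{n_a}{n}\pi_a P_{ab}P_{ac}+\tfrac{n_b}{n}\pi_b P_{ba}P_{bc}+\tfrac{n_c}{n}\pi_c P_{ca}P_{cb}\bigr)$, which is the claimed formula.

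This is essentially a bookkeeping computation rather than a proof with a genuine obstacle; the only point needing care is verifying that the $H_n-H_{n_i-1}$ contributions truly cancel, which works precisely because reversibility makes the ``path'' and ``chain'' prefactors coincide --- in the absence of reversibility these harmonic terms persist, consistent with the remark that the analogous simple formula fails for $\O=4$ when $\bfP$ is not parent-independent on $\Oset$. As a consistency check, specializing further to a parent-independent $\bfP$ should reproduce the formula of \cite[Theorem 6.3]{jen:son:2011:tpb}.
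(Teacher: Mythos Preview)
Your argument is correct: reversibility collapses $\pi_k P_{kj}P_{ji}$ to $\pi_j P_{ji}P_{jk}$, the harmonic and $\fall{n_j+n_k}{2}$ terms cancel in pairs, and the remaining rational pieces combine as you describe. This is in fact the route the paper alludes to at the end of the proof of \thmref{thm:O3}, where it remarks that under reversibility the general $\O=3$ formula simplifies to the expression in \corref{cor:O3reversible}.

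However, the paper's \emph{primary} proof of \corref{cor:O3reversible} (in \sref{martingale}) is quite different and does not pass through \thmref{thm:O3} at all. It works directly from the urn representation $\rt(\n)=\sum_T \bbP(\C_{\Oset}\mid T)\Pn(T)$ of \thmref{treesum}. Reversibility implies that $\bbP(\C_{\Oset}\mid T)$ depends only on the unrooted shape of $T$; for $\O=3$ there are just three such shapes, indexed by the interior vertex. The probability that $a$ is the interior vertex equals the probability that $a$ is chosen as the parent at the first killing time $\tau$, which by the martingale property of the proportion $X_t$ of $a$-colored balls is $\bbE[X_\tau]=\bbE[X_0]=n_a/n$. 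This yields the formula in one line, with no combinatorial simplification.

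The trade-off: your computation shows explicitly \emph{how} the complicated terms in \thmref{thm:O3} conspire to cancel under reversibility, and it requires no new machinery beyond that theorem. The martingale proof is shorter and more conceptual --- it explains \emph{why} the answer has the simple form $n_j/n$ rather than verifying it after the fact --- and it bypasses the harmonic sums entirely. The martingale idea is also what drives the paper's approach to $\O=4$ in spirit, since it motivates working with unrooted trees when $\bfP$ is reversible.
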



\begin{theorem}\label{thm:O4}
	For $\O = 4$, if the mutation transition matrix $\bfP$ is reversible and irreducible when restricted to the observed alleles $\Oset$, then $\qt(\n)$ is given by 
	\begin{align*}
	\qt(\n) = \Lambda(\n) \sum_{ \text{\rm distinct}\, i, j, k, l\in\Oset} \left[
	\pi_i P_{ij} P_{ik} P_{il} \gamma(\n, i, j, k, l) + \pi_i P_{ij} P_{ik} P_{jl} \delta(\n, i, j, k, l)\right],
	\end{align*}
	where 
	\begin{align*}
\gamma(\n, i, j, k, l) {}={}& {n_i \over n} \Bigg\{ \bigg[ {n_i - 1 \over 2 (n_i + n_j + n_k - 1)} - {2 n_j n_l \over \fall{n_i + n_j + n_k}{2}} \bigg] + {n_l \over 2(n_j + n_k + n_l)} \\
	&    \phantom{{n_i \over n} \left[ \right.}- \bigg[ {n_l (n_i - 1) \over (n_k + n_l)(n_i + n_j - 1)} - {2 n_j n_l \over \fall{n_i + n_j}{2}} \bigg] \Bigg\} \\
	& \hspace{-5mm} + {2 n_i n_j n_l \over \fall{n_i + n_j + n_k + 1}{3}} (H_n - H_{n_l - 1}) - {2 n_i n_j n_l \over \fall{n_i + n_j + 1}{3}} (H_n - H_{n_k + n_l - 1}), 
	\end{align*}
and
	\begin{align*}
	{\delta(\n, i, j, k, l)} {}={}& {n_i \over n} \Bigg\{ \bigg[{n_j \over {n_i + n_j + n_k - 1}} + {2 n_j n_l \over \fall{n_i + n_j + n_k}{2}} \bigg]\\
	&\phantom{{n_i \over n} \Bigg\{} - \bigg[ {n_j n_l \over (n_k + n_l)(n_i + n_j - 1)} + {2 n_j n_l \over \fall{n_i + n_j}{2}} \bigg] \Bigg\}	\\
	& \hspace{-5mm} - {2 n_i n_j n_l \over \fall{n_i + n_j + n_k + 1}{3}} (H_n - H_{n_l - 1}) + {2 n_i n_j n_l \over \fall{n_i + n_j + 1}{3}} (H_n - H_{n_k + n_l - 1}).
	\end{align*}
\end{theorem}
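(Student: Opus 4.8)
The plan is to determine $\qt(\n)$ for $\O=4$ by extracting a recursion from \eqref{eq:main_rec}. Substituting the expansion $q(\m)=\theta^{|\mathcal{O}_{\m}|-1}\qt(\m)+O(\theta^{|\mathcal{O}_{\m}|})$ into \eqref{eq:main_rec} for a configuration $\n$ with $\O=4$ and collecting the coefficient of $\theta^{3}$ yields
\begin{equation*}
n(n-1)\,\qt(\n) \;=\; \sum_{i\,:\,n_i\geq 2} n_i(n_i-1)\,\qt(\n-\e_i) \;+\; \sum_{i\,:\,n_i=1}\ \sum_{j\in\Oset,\ j\neq i} P_{ji}\,\qt(\n-\e_i+\e_j),
\end{equation*}
since the prefactor $\theta$ in \eqref{eq:main_rec} means that a term $q(\n-\e_i+\e_j)$ reaches order $\theta^{3}$ only when $n_i=1$ and $j\in\Oset$, in which case $\n-\e_i+\e_j$ has exactly three observed types. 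Together with \thmref{thm:O3} (read with the convention that its formula vanishes when $\bfP$ restricted to the three types is reducible) and the fact that $|\n|\geq4$ whenever $\O=4$, this recursion determines $\qt(\n)$ uniquely by induction on $|\n|$: the first sum involves only $4$-allele configurations of smaller size, and the second sum involves only $3$-allele configurations, which are already known.

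The second step is to collapse the unknown to two scalar functions using reversibility. Expanding the $3$-allele terms via \thmref{thm:O3} produces degree-three monomials in the entries of $\bfP$ and $\bfpi$; the identity $\pi_i P_{ij}=\pi_j P_{ji}$ lets me rewrite each such monomial as a positive multiple of one of the two canonical forms $\pi_i P_{ij}P_{ik}P_{il}$ (a ``star'' on the four observed types) or $\pi_i P_{ij}P_{ik}P_{jl}$ (a ``path''), these being the only two unrooted tree topologies on four labelled vertices, and these monomials are linearly independent as $\bfP$ ranges over reversible matrices. Matching coefficients of each monomial then turns the single recursion above into scalar recursions in $\n$: the coefficient of a given star or path monomial in $\qt(\n)$ is $\Lambda(\n)$ times a symmetrization of $\gamma$ or $\delta$ over the relabellings that fix that tree shape. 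Writing $F(\n):=\qt(\n)/\Lambda(\n)$ makes these recursions telescoping in $|\n|$, with the homogeneous part coming from the $n_i\geq2$ terms and the forcing coming from the $n_i=1$ terms, into which the known three-allele coefficients (together with their own $\Lambda$, falling factorials, and harmonic numbers) are fed.

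The third step is to solve these scalar recursions in closed form. Telescoping $F(\n)$ along paths that decrease $|\n|$ one unit at a time, the forcing terms (rational functions of the partial counts) sum to the falling-factorial expressions $\fall{n_i+n_j+n_k}{2}$ and $\fall{n_i+n_j+n_k+1}{3}$ appearing in $\gamma$ and $\delta$, while summing $1/t$ over the sample sizes $t$ at which the relevant mutation can be placed produces exactly the harmonic-number differences $H_n-H_{n_l-1}$ and $H_n-H_{n_k+n_l-1}$. Equivalently --- and presumably this is the cleaner route --- one sets up a P\'olya-type urn realizing the most-parsimonious genealogies and reads off $\gamma$ and $\delta$ as expectations over that urn, with the harmonic numbers arising as $\sum_t \frac{1}{t}\,\mathbb{E}[\,\cdot\,]$ along the urn's steps and a martingale in the urn organizing the evaluation.

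The main obstacle is the combinatorial bookkeeping in the last two steps: tracking how the three-allele input must be symmetrized over relabellings of $\{j,k,l\}$, determining which falling-factorial or harmonic-number combination each forcing term contributes, and fixing every sign and index shift (for instance distinguishing $H_n-H_{n_l-1}$ from $H_n-H_{n_k+n_l-1}$). Reversibility is used essentially here --- it is precisely what collapses all the $\bfP$-monomials to the two tree shapes; without it there are strictly more independent monomials than two scalar functions can absorb, which matches the hypothesis of the theorem and the stated failure of the simpler parent-independent formula when $\O=4$. Once the scalar solutions are in hand, substituting back and checking the base case $|\n|=4$ against \thmref{thm:O3} (in the form of \corref{cor:O3reversible} and its asymmetric analogues) completes the proof.
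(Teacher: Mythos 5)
Your setup is sound and in fact coincides with the paper's own bookkeeping: collecting the coefficient of $\theta^{\O-1}$ in \eqref{eq:main_rec} gives exactly the recursion $n(n-1)\qt(\n)=\sum_{i:n_i>1}n_i(n_i-1)\qt(\n-\e_i)+\sum_{i:n_i=1}\sum_{j\in\Oset,\,j\neq i}P_{ji}\,\qt(\n-\e_i+\e_j)$ that appears in the proof of \thmref{treesum}, its unique solvability by induction is noted there too, and feeding in the reversible three-allele formula (\corref{cor:O3reversible}, in its symmetrized form) as the forcing, with the star/path monomial reduction under reversibility, is precisely the paper's organizing principle for $\O=4$.

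The genuine gap is in the step that constitutes the theorem: actually producing the closed forms of $\gamma$ and $\delta$. Your mechanism --- ``telescoping $F(\n)$ along paths that decrease $|\n|$ one unit at a time'' --- does not work as described, because the homogeneous part of the recursion branches over every colour with $n_i\geq 2$; the solution is an expectation over the random order of removals, not a sum along a single path. The paper resolves this by conditioning on the configuration $\m$ at the first colour-killing (\propref{prop:rt_recursion}), which converts the multi-step recursion into one sum weighted by hypergeometric factors ${\n\choose\m}/{n\choose m}$ restricted to $\m\succ\bfzero$ with $m_l=1$; evaluating that restricted sum is the real work, requiring the inclusion--exclusion identity for restricted hypergeometric moments (\factref{fact:restricted_moment}) together with the ratio and harmonic-number identities (\factref{fact:ratio}, \factref{fact:ratio2}), which is exactly where the $\fall{n_i+n_j+n_k}{2}$, $\fall{n_i+n_j+1}{3}$, $H_n-H_{n_l-1}$ and $H_n-H_{n_k+n_l-1}$ structure originates. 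Asserting that ``the forcing terms sum to the falling-factorial expressions'' and that ``summing $1/t$ produces the harmonic-number differences'' supplies none of this. Two further loose ends: the ``matching coefficients'' step rests on an unproved linear-independence claim for the tree monomials (avoidable by instead verifying the stated ansatz in the recursion, which you also do not do), and $\gamma$ is not symmetric in $j,k$ --- only the combination $\gamma(\n,i,j,k,l)+\gamma(\n,i,k,j,l)$ is determined by the star coefficient, a point the paper handles by first computing a symmetric quantity $\zeta$ and then splitting it. You rightly suspect the urn route is cleaner, but note that the martingale argument only delivers the $\O\leq 3$ inputs; for $\O=4$ the paper still must carry out the hypergeometric computation above, so invoking ``a martingale organizing the evaluation'' does not close the gap.
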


\section{Proofs of the main results \label{sec:urn_proc}}

In this section, we construct an urn process to derive the closed-form formulas for $\qt(\n)$ mentioned in the previous section.  We use the urn process to decompose $\qt(\n)$ into a sum-product of two vectors, one which  depends
only on the sample configuration $\n$ and the other which  depends only on the mutation transition matrix $\bfP$. Using this decomposition, we show that $\qt(\n)$ corresponds to the probability of a certain event in the urn process.

Throughout, we use $\rt(\n)$ to denote the following rescaled version of $\qt(n)$:
\begin{align}
\rt(\n) &= {\qt(\n) \over \Lambda(\n)},
\label{eq:qt_rt}
\end{align}
where $\Lambda(\n)$ is the combinatorial coefficient defined in \eqref{eq:Lambda}.

\subsection{Description of the urn process}

Let $\n$ be the sample configuration of interest.
We have an urn with $n$ balls, $n_i$ of which have color $i$. We remove balls one at a time uniformly at random until there are no more balls in the urn. However, whenever we ``kill'' a color (i.e., remove the last ball of that color), we add back a ball of a different color. We do this by picking another ball from the urn, copying it, and returning both copies to the urn. Note that when we kill the last color, we do not add any balls back, since there are no more colors to choose from.

Suppose that when we kill color $i$, we add back a ball of color $j$. We then call $j$ the \emph{parent} of $i$, and call the last surviving color the \emph{root}. This generates a rooted tree whose vertices consist of the $\O$ observed colors (alleles).

Let $T$ be any rooted tree on $\Oset$. We denote the probability of generating $T$ under the above process as $\Pn(T)$. Let $E(T)$ be the edge set of $T$, and let $\rho(T)$ denote the root vertex of $T$. By convention, we draw edges as pointing away from the root, so the edge $(j \to i)$ indicates that $j$ is the parent of $i$.

The main idea of this section is that to compute $\qt(\n)$, it is enough to compute $\Pn(T)$ for each $T$. In particular, we prove the following theorem in \sref{sec:proof_thm4}:
\begin{theorem}\label{treesum}
  Recall that for a transition matrix $\bfP$ that is irreducible when restricted to $\Oset$, $\qt(\n)$ denotes the first nonzero coefficient in the Taylor expansion \eqref{eq:q_expansion} of $q(\n)$ about $\theta=0$.
Given a rooted tree $T$ described above, define $f_{\bfP}(T)$ as
\begin{equation*}
  f_{\bfP}(T) = \pi_{\rho(T)} \prod_{(j \to i) \in E(T)} P_{ji}.
\end{equation*}
Then, the quantity  $\rt(\n) = \qt(\n)/\Lambda(\n)$ is given by
  \begin{equation}
  \rt(\n) = \sum_T \Pn(T) f_{\bfP}(T) =  \En[f_{\bfP}(T)], 
  \label{eq:rtn_form}
  \end{equation}
where the sum is taken over all rooted trees $T$ with $\O$ vertices bijectively labeled by $\Oset$.
That is,  $\rt(\n)$ is the expectation of $f_{\bfP}(T)$ under the above process.
\end{theorem}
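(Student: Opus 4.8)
The plan is to show that $\rt(\n) = \En[f_{\bfP}(T)]$ by proving that both sides satisfy the same recursion in $\n$ with the same boundary conditions, and then invoking uniqueness. First I would derive a recursion for $\rt(\n)$ from the main recursion \eqref{eq:main_rec}. Substituting the expansion \eqref{eq:q_expansion} into \eqref{eq:main_rec} and extracting the coefficient of $\theta^{\O-1}$ gives a recursion for $\qt(\n)$; one must be careful that the two sums on the right-hand side contribute at different orders — the coalescence term $\sum_i n_i(n_i-1)\qt(\n-\e_i)$ keeps the number of observed alleles fixed (so contributes at order $\theta^{\O-1}$), while the mutation term $\theta\sum_{i,j}P_{ji}n_i q(\n-\e_i+\e_j)$ contributes at order $\theta^{\O-1}$ only through those terms $\n-\e_i+\e_j$ that have $\O-1$ observed alleles (i.e., $n_i=1$ and $j\in\Oset$), since for $j\notin\Oset$ the configuration $\n-\e_i+\e_j$ has $\O+1$ alleles and for $n_i>1$ it has $\O$ alleles whose leading coefficient is itself $O(\theta^{\O-1})$, pushing the product to $O(\theta^{\O})$. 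Dividing through by $\Lambda(\n)$ and using the identity relating $\Lambda(\n)$ to $\Lambda(\n-\e_i)$ and $\Lambda(\n-\e_i+\e_j)$ then yields a clean recursion for $\rt(\n)$ purely in terms of $\rt$ of smaller configurations.

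Next I would set up the matching recursion for $\En[f_{\bfP}(T)]$ directly from the urn dynamics. Condition on the first ball removed from the urn. If it has color $i$ and $n_i>1$, the urn now has configuration $\n-\e_i$ and no color has been killed, so the tree is generated by the remaining process on $\n-\e_i$; this happens with probability $n_i/n$. If $n_i=1$, removing that ball kills color $i$: we then pick another ball uniformly from the remaining $n-1$ balls, say of color $j$, duplicate it, and continue with configuration $\n-\e_i+\e_j$ on the color set $\Oset\setminus\{i\}\cup\{\text{duplicated }j\}=\Oset\setminus\{i\}$ — but with the edge $(j\to i)$ now fixed in $T$. The factor $f_{\bfP}(T)$ multiplicatively accumulates $P_{ji}$ for that edge, so $\En[f_{\bfP}(T)]$ over the whole process factors as $\sum_j (\text{prob. of }j) \cdot P_{ji} \cdot \En_{\n-\e_i+\e_j}[f_{\bfP}(T')]$ over the residual tree $T'$ on $\Oset\setminus\{i\}$. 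Collecting the cases and weighting by $1/n$ for the first removal gives a recursion for $g(\n):=\En[f_{\bfP}(T)]$ that I expect to coincide, termwise, with the one derived for $\rt(\n)$. The boundary case $\O=1$, $\n=n\e_i$, is immediate on both sides: the urn never kills a color until the last ball, producing the trivial one-vertex tree rooted at $i$, so $g(n\e_i)=\pi_i=\qt(n\e_i)/\Lambda(n\e_i)=\rt(n\e_i)$.

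Finally, an induction on $\O$ (or on $|\n|$) closes the argument: assuming the identity for all configurations with fewer observed alleles — which handles the $n_i=1$ mutation terms, where the residual configuration $\n-\e_i+\e_j$ has $\O-1$ observed alleles — and for all strictly smaller configurations with the same number of observed alleles — which handles the $n_i>1$ coalescence terms — the two recursions force $\rt(\n)=g(\n)$. The main obstacle I anticipate is purely bookkeeping: correctly tracking which terms in \eqref{eq:main_rec} survive to leading order in $\theta$ after the substitution, and verifying that the combinatorial prefactors produced by the $\Lambda(\n)/\Lambda(\n-\e_i)$ and $\Lambda(\n)/\Lambda(\n-\e_i+\e_j)$ ratios match exactly the first-removal probabilities $n_i/n$ and the re-sampling probabilities from the urn. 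Once the two recursions are written side by side this should be a direct comparison, but getting the normalization of $\Lambda$ right — including the $1/(n-1)!$ denominator and the factorials over observed alleles — is where care is needed.
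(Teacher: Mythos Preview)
Your proposal is correct and follows essentially the same approach as the paper's inductive proof: derive the recursion \eqref{eq:rt_recurs} for $\rt(\n)$ by extracting the $\theta^{\O-1}$ coefficient from \eqref{eq:main_rec} and dividing by $\Lambda(\n)$, verify that $\En[f_{\bfP}(T)]$ satisfies the same recursion by conditioning on the first draw in the urn, and conclude by the joint induction on $(\O,n)$ with base case $\rt(\e_i)=\pi_i$. The bookkeeping you flag---that $\Lambda(\n)/\Lambda(\n-\e_i)=(n_i-1)/(n-1)$ for $n_i>1$ and $\Lambda(\n-\e_i+\e_j)/\Lambda(\n)=n_j$ for $n_i=1$---is exactly what converts the $\qt$-recursion into the urn transition probabilities, and the paper handles it in the same one-line ``simplifying'' step.
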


Note that we can view $f_{\bfP}(T)$ as a probability as well. In particular, suppose we relabel the vertices of $T$ as follows: we assign a new label from $[K]$ to $\rho(T)$ according to the stationary distribution $\pi$, and for each edge in $T$, we assign a new label to the child according to the new label of its parent and the transition matrix $\bfP$. Then $f_{\bfP}(T)$ is the probability that we assign the original labels to all the vertices, given that we drew $T$. That is, if $\C_{\Oset}$ is the event that we assign the original labels to all vertices, then
\begin{equation*}
  f_{\bfP}(T) = \bbP(\C_{\Oset} \mid T) = \pi_{\rho(T)} \prod_{(j \to i) \in E(T)} P_{ji}.
\end{equation*}

\noindent
This immediately leads to the following interpretation:
\begin{align}
    \rt(\n) = \sum_T \bbP(\C_{\Oset} \mid T) \Pn(T) = \Pn(\C_{\Oset}).
\label{eq:relabel} 
\end{align}
That is, $\rt(\n)$ is the unconditional probability that we correctly label all the alleles, if we use the urn process to generate a tree on the alleles and then use the tree to assign labels.

\subsection{An inductive proof of \thmref{treesum}}
\label{sec:proof_thm4}

In this subsection, we provide an inductive proof of \thmref{treesum}.
In \sref{mod_coal}, we provide an alternative proof based on a modified coalescent process which provides a more intuitive explanation for why the urn process works.

\begin{proof}[Proof of \thmref{treesum}]
Recall the recursion in \eqref{eq:main_rec}:
\begin{equation*}
	n(n-1+\theta) q(\n) = \sum_{i} n_i (n_i-1) q(\n-\e_i) + \theta \sum_{i,j}\, P_{ji}\, n_i\, q(\n-\e_i+\e_j).
\end{equation*}
Recall also that if $\bfP$ is irreducible when restricted to $\Oset$, $q(\n)$ has leading order power $\theta^{\O - 1}$ in its Taylor series. Hence we get the following recursion for $\qt(\n)$:
\begin{equation*}
  n(n-1) \qt(\n) = \sum_{i: n_i > 1} n_i (n_i - 1) \qt(\n - \e_i) + \sum_{i: n_i = 1} \sum_{j: j \neq i} P_{ji} n_i \qt(\n - \e_i + \e_j).
\end{equation*}
Plugging in $\qt(\n) = \Lambda(\n) \rt(\n)$ and simplifying gives us the following recursion for $\rt(\n)$:
\begin{equation}
  n(n-1) \rt(\n) = \sum_{i: n_i > 1} n_i (n - 1) \rt(\n - \e_i) + \sum_{i: n_i = 1} \sum_{j: j \neq i} P_{ji} n_j \rt(\n - \e_i + \e_j).
\label{eq:rt_recurs}
\end{equation}

A simple induction over $\O$ and $n$ shows
that this recursion has a unique solution
given the boundary conditions $\rt(\e_i)$.
So if we can show \eqref{eq:rtn_form} when $\O = n = 1$, and then show that $\sum_T \bbP(\C_{\Oset} \mid T) \Pn(T)$ satisfies the recursion \eqref{eq:rt_recurs}, then we will be done.
The base case is trivial: when $\Oset = \{a\}$, there is only one possible tree, $T = \{ a\}$, with $\Pn(T) = 1$ and $\bbP(\C_{\Oset} \mid T) = \pi_a = \lim_{\theta \to 0} q(\n) = \qt(\n) = \Lambda(\n) \rt(\n) = \rt(\n)$.

To show $\sum_T \bbP(\C_{\Oset} \mid T) \Pn(T)$ satisfies \eqref{eq:rt_recurs}, we start by giving recursions for $\Pn(T)$ and $\bbP(\C_{\Oset} \mid T)$. Let $z(i)$ be the parent of $i$ in $T$, and let $L(T)$ be the set of leafs of $T$ (where the root is not considered a leaf). Conditioning on the first event in the urn process gives us
\begin{equation}
  \Pn(T) = \sum_{i: n_i > 1} \frac{n_i}n \bbP_{\n - \e_i}(T)
  + \sum_{\substack{ i \in L(T): n_i = 1}} \frac{n_{z(i)}}{n (n-1)} \bbP_{\n - \e_i + \e_{z(i)}}(T \setminus \{i\}).
\label{eq:tree_recurs}
\end{equation}
Furthermore, if $i \in L(T)$, we have
\begin{equation}
 \bbP(\C_{\Oset} \mid T) =  P_{z(i),i} \bbP(\C_{\Oset \setminus \{i\}} \mid T \setminus \{i\}).
\label{eq:pct_recurs}
\end{equation}
Using \eqref{eq:tree_recurs} and \eqref{eq:pct_recurs}, and collecting terms, we arrive at 
\begin{align*}
\lefteqn{n (n-1) \sum_T \bbP(\C_{\Oset} \mid T) \Pn(T)}  \hspace*{1cm}&\\
=&  \sum_T \bbP(\C_{\Oset} \mid T) \Bigg[ \sum_{i: n_i > 1} n_i (n-1) \bbP_{\n - \e_i}(T)
  + \sum_{\substack{i \in L(T): n_i = 1}} n_{z(i)} \bbP_{\n - \e_i + \e_{z(i)}}(T \setminus \{i\})\Bigg]\\
=&  \sum_{i: n_i > 1} n_i (n-1) \sum_T \bbP_{\n - \e_i}(T) \bbP(\C_{\Oset} \mid T) \\
&{} + \sum_{\substack{i: n_i = 1}}\sum_{j:j\neq i}  P_{ji} n_j \sum_{T'} \bbP_{\n - \e_i + \e_{j}}(T') \bbP(\C_{\Oset \setminus \{i\}} \mid T'),
\end{align*}
where the sum over $T'$ is taken over all rooted trees with vertex set $\Oset \setminus \{i\}$. 
Hence, $\sum_T\bbP(\C_{\Oset}\mid T)\Pn(T)$ satisfies \eqref{eq:rt_recurs}.
\hfill{}\qed
\end{proof}

\subsection{Connection to the coalescent \label{mod_coal}}

In this subsection, we motivate our urn process by drawing a connection to the coalescent. We then use this connection with the coalescent to provide an alternate proof of \thmref{treesum}.

Let $\H$ be a history of mutation and coalescence events on $n$ labeled individuals, and let $q(\H)$ be the probability of $\H$. Then we have 
\begin{equation}
  q(\n) = \sum_{\text{$\H$ consistent with $\n$}} q(\H). \label{eq:sum_qH}
\end{equation}
It turns out that only histories with exactly $\O - 1$ mutations contribute to the leading order term of $q(\n)$; this is the observation also utilized in
\cite{jen:son:2011:tpb}.  Furthermore, each history of choices in our urn process corresponds with a genealogical history of $\O - 1$ mutations. This provides the basic intuition for why the urn sampling scheme works.

We start by providing a modified coalescent that generates a history $\H$ that is consistent with $\n$ and has exactly $\O - 1$ mutations. We then show that this modified coalescent is equivalent to our urn sampling process. Finally, we prove \thmref{treesum} by relating the modified coalescent with Kingman's coalescent.

Consider the following modified coalescent process
on our sample:
\begin{enumerate}
\item Select allele $i$ with probability $m_i/m$, where $\m$ is our current configuration of alleles.
\item If $m_i > 1$, choose a random pair in allele $i$ to coalesce (so $\m$ is replaced with $\m - \e_i$).
\item If $m_i = 1$, have the last individual of allele $i$ mutate to allele $j$ with probability $m_j/(m-1)$ (so $\m$ is replaced with $\m - \e_i + \e_j$).
\item Repeat steps 1 to 3 until all individuals have coalesced.
\end{enumerate}

It should be clear that the modified coalescent only generates histories with exactly $\O - 1$ mutations, since each mutation kills an allele permanently.

If we take an unordered view of our sample, then the modified coalescent is equivalent to the urn process, for they have the same initial configuration and transition probabilities between configurations. In particular, when $m_i > 1$ we move from $\m$ to $\m - \e_i$ with probability $m_i/m$, and when $m_i = 1$ we move from $\m$ to $\m - \e_i + \e_j$ with probability $m_j/\fall{m}{2}$. We generate trees on $\Oset$ by drawing an edge $(j \to i)$ whenever we make a transition from $\m$ to $\m - \e_i + \e_j$, i.e. whenever there is a mutation from $i$ to $j$.

We now give a proof of \thmref{treesum}, using the modified coalescent in place of the urn process:

\begin{proof}[Alternative proof of \thmref{treesum}]
Let $\H$ be a coalescent history with exactly $M$ mutations. Running time backwards from the present, we suppose that the $i$th mutation was from allele $u_i$ to allele $v_i$, and that the most recent common ancestor has allele $\rho$. We further suppose that $J_i$ is the total number of lineages at the time of the $i$th mutation. Then we have that
\begin{equation*}
  q(\H) = \pi_\rho \Big(\prod_{i = 1}^{M} P_{v_i u_i} \Big)
  \frac{\theta^{M}}{\prod_{i=1}^{M} J_i (\theta + J_i - 1)} \frac{2^{n-1}}{n! \fall{\theta + n - 1}{(n-1)}} ,
\end{equation*}
since the $i$th coalescence contributes probability $\frac{n-i}{n-i+\theta} {n -i + 1 \choose 2}^{-1} = \frac2{(n - i + 1) (n-i + \theta)}$,
and the $i$th mutation contributes probability $\frac{\theta P_{v_i u_i}}{J_i (J_i - 1 + \theta)}$.

Now, observe that
\begin{equation}
  \qt(\H) \equiv \lim_{\theta \to 0} \frac{q(\H)}{\theta^{M}} = \pi_{\rho} \Big(\prod_{i = 1}^{M} P_{v_i u_i} \Big) \frac{2^{n-1}}{n! (n-1)! \prod_{i=1}^{M} J_i (J_i - 1)}. \label{eq:qtH}
\end{equation}
Therefore, the Taylor series for $q(\H)$
has leading power $\theta^{M}$, with coefficient $\qt(\H)$.

Hence by \eqref{eq:sum_qH}, the Taylor series for $q(\n)$ has leading power $\theta^{\O - 1}$, and its leading coefficient is given by the sum of all $\qt(\H)$ such that $\H$ is consistent with $\n$ and has $\O - 1$ mutations.

For such an $\H$, let $\Pn(\H)$ be the probability of generating $\H$ under our modified coalescent.
Then we have that
\begin{equation}
  \Pn(\H) = \frac{2^{n-1}}{n! \prod_{k=1}^{\O} (n_k-1)! \prod_{i=1}^{\O - 1} J_i (J_i - 1)}. \label{eq:PnH}
\end{equation}
To see this, note that if our current sample is $\m$, the probability that the next event is a coalescence on allele $i$ with $m_i > 1$ is
\begin{equation*}
  \frac{m_i}{m} \frac{2}{m_i (m_i - 1)} = \frac{2}{m (m_i - 1)},
\end{equation*}
and if $m_i = 1$, the probability that the next event is a mutation from allele $i$ to allele $j$ (where $j \neq i$) is
\begin{equation*}
  \frac{m_j}{m (m-1)}.
\end{equation*}
Multiplying the probabilities of the mutation and coalescence events in $\H$, and noting that the numerator of each mutation term cancels with the denominator of a future coalescence term, yields the equation \eqref{eq:PnH}.

Combining \eqref{eq:qtH} with \eqref{eq:PnH} yields
\begin{equation*}
  \qt(\H) = \Lambda(\n) \pi_{\rho} \Big(\prod_{i = 1}^{\O - 1} P_{v_i u_i} \Big) \Pn(\H)
\end{equation*}

Now let $\mathscr{T}(\H)$ be the resulting tree on $\Oset$ if
we draw an edge $(j \to i)$ when allele $i$ mutates to allele $j$.
Then we have
\begin{align*}
  \qt(\n) &= \sum_{\substack{\text{$\H$ consistent with $\n$} \\ \text{$\H$ has $\O - 1$ mutations}}} \qt(\H) \\
  &= \Lambda(\n) \sum_{T} \pi_{\rho(T)} \Big(\prod_{(j \to i) \in T} P_{ji} \Big) \Big(\sum_{\H: \mathscr{T}(\H) = T} \Pn(\H) \Big) \\
  &= \Lambda(\n) \sum_{T} \pi_{\rho(T)} \Big(\prod_{(j \to i) \in T} P_{ji} \Big) \Pn(T) \\
  &= \Lambda(\n) \sum_{T} f_{\bfP}(T) \Pn(T),
\end{align*}
and hence
\[
\rt(\n) = \sum_T f_{\bfP}(T) \Pn(T),
\]
as needed. \hfill{}\qed
\end{proof}

\subsection{A martingale property \label{martingale}}

Here, we prove \thmref{thm:O2} and \corref{cor:O3reversible} by using martingales to compute $\Pn(T)$ for $\Oset = \{a,b\}$, and for $\Oset = \{a,b,c\}$ when $\bfP$ is reversible when restricted to $\Oset$.
We run time as follows: whenever we remove a ball in the urn process, count this as one time step. If in doing so, we kill a color, count the adding of another ball as a separate time step.

Let $\F_t$ be the $\sigma$-algebra generated by all sequences of choices up to time $t$. Let $X_t$ be the proportion of balls that have color $a$ at time $t$; so $X_0 = n_a/n$. It is easy to check that $\{X_t\}$ is a martingale with respect to $\{\F_t\}$: Suppose that $\m$ is the remaining sample after time $t-1$, and we are deleting a ball at time $t$. Then,
\begin{align*}
  \bbE[X_t \mid \F_{t-1}] &= \frac{m_a}{m} \frac{m_a - 1}{m - 1} + \sum_{i \neq a} \frac{m_i}{m} \frac{m_a}{m - 1} = \frac{m_a}{m} = X_{t-1}.
\end{align*}
On the other hand, if we are adding a ball at time $t$, then
\begin{align*}
  \bbE[X_t \mid \F_{t-1}] &= \frac{m_a}{m} \frac{m_a + 1}{m + 1} + \sum_{i \neq a} \frac{m_i}{m} \frac{m_a}{m + 1} = \frac{m_a}{m} = X_{t-1}.
\end{align*}
So, $\{(X_t,\F_t),t\geq 0\}$ is a martingale.

\begin{proof}[Proof of \thmref{thm:O2}]
Suppose $\Oset = \{a,b\}$. Let $T$ be the tree whose vertex set is $\Oset$, with $a$ being the root. Let $\tau$ be the the first time we kill a color. Noting that $\tau$ is a stopping time, we obtain
\begin{align*}
  \Pn(T) &= \bbE[ \Pn(T \mid \F_{\tau}) ] \\
  &= \bbE[ \I(\text{Color $a$ is the last remaining at time $\tau$})] \\
  &= \bbE[X_{\tau}] = \bbE[X_{0}]=\frac{n_a}n.
\end{align*}
Therefore, by \thmref{treesum},
\begin{equation*}
  \qt(\n) = \Lambda(\n) \Big( \frac{n_a}n \pi_a P_{ab} + \frac{n_b}n \pi_b P_{ba} \Big).
\tag*{\hfill{}\qed}
\end{equation*}
\end{proof}

\begin{proof}[Proof of \corref{cor:O3reversible}]
Suppose $\Oset = \{a,b,c\}$ and $\bfP$ is reversible  when restricted to $\Oset$. Note that $\bbP(\C_{\Oset} \mid T)$ does not depend on how $T$ is rooted, for by reversibility we can move the root around by
\begin{equation*}
  \pi_\rho P_{\rho k} = \pi_k P_{k \rho}, \qquad \forall k \in \Oset, k \neq \rho.
\end{equation*}
Therefore, we redefine $\Pn(T)$ to be the probability of drawing the undirected tree $T$. We still have $\rt(\n) = \sum_T \bbP(\C_{\Oset} \mid T) \Pn(T)$, but now the sum is taken over undirected $T$.
Now let $T$ be the tree on $\{a,b,c\}$ whose interior vertex is $a$. We draw $T$ if and only if $a$ is chosen as the parent of the first color that we kill.  So, letting $\tau$ be the first killing time and noting $X_\tau = \Pn(T\mid\F_\tau)$, we have
\begin{equation*}
  \Pn(T) = \bbE[\Pn(T \mid \F_\tau)] = \bbE[X_\tau] = \bbE[X_0]=\frac{n_a}{n}.
\end{equation*}
Therefore, by \thmref{treesum},
\begin{equation*}
  \qt(\n) = \Lambda(\n) \Big( \frac{n_a}n \pi_a P_{ab} P_{ac}
  + \frac{n_b}n \pi_b P_{ba} P_{bc}
  + \frac{n_c}n \pi_c P_{ca} P_{cb} \Big).
\tag*{\hfill{}\qed}
\end{equation*}
\end{proof}

\subsection{A recursion for $\rt(\n)$}
In this section, we derive a recursion for $\rt(\n)$ which will be useful for deriving closed-form formulas for $\qt(\n)$ when $\O = 3, 4$.
Given a sample configuration $\n$ and a subsample $\m$, define the expression ${\n \choose \m}$ as
\[
{\n \choose \m} = \prod_{i \in \Oset} {n_i \choose m_i}.
\]
The following proposition provides a recursion relating $\rt(\n)$ to $\rt(\m)$ where $\Om = \O - 1$.

\begin{propos}\label{prop:rt_recursion}
Suppose $\bfP$ is irreducible when restricted to $\Oset$ and
let $\theta^{\O-1}\qt(\n)=\theta^{\O-1}\Lambda(\n)\rt(\n)$ denote the leading order term in the Taylor expansion \eqref{eq:q_expansion} of $q(\n)$ about $\theta=0$.  Then,
 $\rt(\n)$ for $\O > 1$ satisfies the recursion
    \begin{align}\label{eq:rt_recursion}
      \rt(\n) = \sum_{i, j \in \Oset: i \neq j} P_{ji} \sum_{\substack{\bfzero \prec \m \preceq \n: \\ m_i = 1}} {{\n \choose \m} \over {n \choose m}} {m_j \rt(\m - \e_i + \e_j) \over m (m-1)}, 
    \end{align}
     with boundary conditions
    \begin{align}\label{eq:rt0}
    \rt(\n) = \pi_a,
    \end{align}
    for all sample configurations $\n = n_a \e_a$, where  $a \in [K]$.
\end{propos}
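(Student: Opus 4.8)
The plan is to prove the recursion through the probabilistic interpretation $\rt(\n)=\Pn(\C_{\Oset})$ established in \thmref{treesum} (equation \eqref{eq:relabel}), by conditioning the urn process on its history up to and including the first time a color is killed. Write $\m$ for the urn configuration immediately before the first kill, $i$ for the color that is then killed, and $j$ for its parent, i.e. the color of the ball added back. The goal is to show that the probability of this ``prefix'' is the coefficient appearing in \eqref{eq:rt_recursion}, and that conditioned on it, the probability of correctly labeling all alleles is $P_{ji}\,\rt(\m-\e_i+\e_j)$.

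First I would determine the law of the prefix. Since a kill permanently removes a color, the requirement that all $\O$ colors are still present at the first kill forces $\bfzero\prec\m\preceq\n$, and $\m$ is then reached from $\n$ by exactly $n-m$ removals with no intermediate kills. Because these removals are uniform without replacement, the multiset of $m$ remaining balls is a uniformly random $m$-subset of the original $n$ balls, so $\Pn(\text{configuration}=\m)=\binom{\n}{\m}/\binom{n}{m}$ for any $\m$ with $\bfzero\prec\m\preceq\n$ (multivariate hypergeometric); conversely, reaching such an $\m$ automatically entails no earlier kill, since any kill would have added a ball back and hence changed both the size and the set of colors. Conditioned on the configuration being $\m$, the next removal is of color $i$ with probability $m_i/m$, and for this removal to be a \emph{kill} we need $m_i=1$; the parent ball is then uniform among the $m-1$ survivors, whose configuration is $\m-\e_i$, so it has color $j$ with probability $m_j/(m-1)$ (necessarily $j\neq i$, since color $i$ is now exhausted). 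Multiplying, the probability of the prefix ``first kill occurs from $\m$ with $m_i=1$, killing $i$, parent $j$'' is $\frac{\binom{\n}{\m}}{\binom{n}{m}}\cdot\frac1m\cdot\frac{m_j}{m-1}=\frac{\binom{\n}{\m}}{\binom{n}{m}}\cdot\frac{m_j}{m(m-1)}$, exactly the coefficient in \eqref{eq:rt_recursion}.

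Next I would peel off the first edge using the relabeling picture. Since $i$ is a leaf of the generated tree $T$ with parent $j$, we have $\bbP(\C_{\Oset}\mid T)=P_{ji}\,\bbP(\C_{\Oset\setminus\{i\}}\mid T\setminus\{i\})$, and the continuation of the urn process after the first kill is a fresh urn process started from $\m-\e_i+\e_j$, whose observed-allele set is $\Oset\setminus\{i\}$ and which generates precisely $T\setminus\{i\}$. Averaging over this continuation and applying \eqref{eq:relabel} to the sample $\m-\e_i+\e_j$ shows that, conditioned on the prefix above, the probability of correctly labeling all alleles equals $P_{ji}\,\rt(\m-\e_i+\e_j)$. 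Summing the product of the two factors over all prefixes, i.e. over $i\neq j$ in $\Oset$ and over $\m$ with $\bfzero\prec\m\preceq\n$ and $m_i=1$, yields \eqref{eq:rt_recursion}. The boundary condition \eqref{eq:rt0} is immediate: for $\n=n_a\e_a$ the only generated tree is the single vertex $a$, so $\rt(\n)=\Pn(\C_{\{a\}})=\pi_a$.

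I expect the main obstacle to be the bookkeeping in the second step rather than anything deep: carefully justifying that the configuration just before the first kill is multivariate hypergeometric and that ``no earlier kill'' comes for free. An alternative, purely algebraic route is to unwind the one-step recursion \eqref{eq:rt_recurs} by induction on $n$: the $n_i=1$ terms of \eqref{eq:rt_recurs} are exactly the $\m=\n$ terms of \eqref{eq:rt_recursion}, while each $\frac{n_i}{n}\rt(\n-\e_i)$ with $n_i>1$, expanded by the induction hypothesis, must be reassembled into the $\m\prec\n$ terms using the identity $\frac{n_i}{n}\,\frac{\binom{\n-\e_i}{\m}}{\binom{n-1}{m}}=\frac{n_i-m_i}{n-m}\,\frac{\binom{\n}{\m}}{\binom{n}{m}}$ together with $\sum_i(n_i-m_i)=n-m$; there the obstacle is making the hypergeometric weights telescope and tracking the $\m=\n$ versus $\m\prec\n$ split consistently with the constraint $m_i=1$. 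I would present the urn argument as the primary proof, since it avoids this reweighting entirely.
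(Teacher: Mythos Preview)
Your proposal is correct and follows essentially the same route as the paper: define the event that the first kill occurs from configuration $\m$ (with $m_i=1$), killing color $i$ and adding back color $j$; compute its probability as $\frac{\binom{\n}{\m}}{\binom{n}{m}}\cdot\frac{m_j}{m(m-1)}$ via the multivariate hypergeometric law; and use the Markov property together with \eqref{eq:relabel} to identify the conditional probability of $\C_{\Oset}$ as $P_{ji}\,\rt(\m-\e_i+\e_j)$. Your observation that $\m\succ\bfzero$ automatically precludes any earlier kill is a nice point that the paper leaves implicit.
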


\begin{proof}[Proof of  \propref{prop:rt_recursion}]
We can derive this recursion from the urn process as follows.	
Let $D_{ij}(\m)$ be the event where the first killing replaces a ball of color $i$ with a ball of color $j$, and where $\m$ is the (unordered) configuration immediately before this killing. 
Then for any event $A$,
\begin{equation}
  \Pn(A) = \sum_{i, j \neq i} \sum_{\substack{\bfzero \prec \m \preceq \n: \\ m_i = 1}} \Pn(D_{ij}(\m)) \Pn(A \mid D_{ij}(\m)) \label{eq:urn_recurs}
\end{equation}
where we use the fact that $\Pn(D_{ij}(\m)) = 0$ if $m_i \neq 1$ or
$m_j = 0$ for any $j \in \Oset$.

We compute $\Pn(D_{ij}(\m))$ when
$\m \succ \bfzero$ and $m_i = 1$.
The probability that $\m$ is the remaining configuration
after $n - m$ draws is
\begin{equation*}
  \frac{(n-m)!}{\prod_k (n_k - m_k)!} \frac{\prod_k \fall{n_k}{n_k - m_k}}{\fall{n}{n-m}}
  = \frac{{\n \choose \m}}{{n \choose m}}.
\end{equation*}
To see this, note that the first term is the number of ways we can make $n-m$ draws that result in the configuration $\m$, and the second term is the probability of each such sequence of draws.

When our current configuration is $\m$ with $m_i = 1$, the probability that on the next draw we replace the last ball of color $i$ with a ball of color $j$ is $m_j/\fall{m}{2}$. Hence we get that
\begin{equation*}
  \Pn(D_{ij}(\m)) = \frac{{\n \choose \m}}{{n \choose m}} \frac{m_j}{m (m-1)}.
\end{equation*}
when $\m \succ \bfzero$ and $m_i = 1$.

Plugging this into \eqref{eq:urn_recurs} yields
\begin{equation}
  \Pn(A) = \sum_{i, j \neq i} \sum_{\substack{\bfzero \prec \m \preceq \n: \\ m_i = 1}} \frac{{\n \choose \m}}{{n \choose m}} \frac{m_j}{m(m-1)} \Pn(A \mid D_{ij}(\m)). \label{eq:urn_recurs2}  
\end{equation}

Now recall from \eqref{eq:relabel} that $\rt(\n) = \Pn(\C_{\Oset})$. That is, $\rt(\n)$ is the probability that we assign the original labels to all alleles, if we use the urn process to generate a tree on $\Oset$ and then use the tree to assign new labels to the alleles. Note that
\begin{equation*}
  \bbP(\C_{\Oset} \mid D_{ij}(\m)) = P_{ji} \bbP_{\m - \e_i + \e_j}(\C_{\Oset \setminus \{i\}})
  = P_{ji} \rt(\m - \e_i + \e_j),
\end{equation*}
since we need to use the urn process with sample $\m - \e_i + \e_j$ to correctly relabel $\Oset \setminus \{i\}$, and then assign the correct label to $\{i\}$ with probability $P_{ji}$. Plugging this into \eqref{eq:urn_recurs2} with $A = \C_{\Oset}$ yields the desired recursion,
\begin{equation*}
  \rt(\n)  = \sum_{i, j \neq i} P_{ji} \sum_{\substack{\bfzero \prec \m \preceq \n: \\ m_i = 1}} {{\n \choose \m} \over {n \choose m}} {m_j \rt(\m - \e_i + \e_j) \over m (m-1)} .  
\tag*{\hfill{}\qed}
\end{equation*}
\end{proof}

In the next two subsections, we use the recursion in \propref{prop:rt_recursion} to provide proofs of \thmref{thm:O3} and \thmref{thm:O4}.

\subsection{Proof of \thmref{thm:O3}  ($\O=3$) \label{sec:combinatorial_proof_O3}}
For $\O = 3$, the following expression for $\rt(\n)$ can be derived using \propref{prop:rt_recursion}:
\begin{align}
\rt(\n) =& \sum_{i, j \neq i} P_{ji} \sum_{\substack{\bfzero \prec \m \preceq \n: \\ m_i = 1}} {{\n \choose \m} \over {n \choose m}} {m_j \rt(\m - \e_i + \e_j) \over m (m-1)}    \nonumber \\
        =& \sum_{i, j \neq i} P_{ji} \sum_{\substack{\bfzero \prec \m \preceq \n: \\ m_i = 1}} {{\n \choose \m} \over {n \choose m}} {m_j \over m (m-1)}
\sum_{\substack{k, l:\\ l \neq k \text{ and } k, l \neq i}} {m_k + \delta_{j,k} \over m} \pi_k P_{kl}   \nonumber \\
        =& \sum_{i, j \neq i} P_{ji} \sum_{\substack{\bfzero \prec \m \preceq \n: \\ m_i = 1}} \left\{ \rule{0mm}{7mm}
        {{\n \choose \m} \over {n \choose m}} {1 \over m^2 (m-1)} \right. \nonumber \\
         & \hspace{2.5cm}\times \left. \left[ \sum_{l: l \neq i, j} m_j(m_j+1) \pi_{j} P_{jl} + \sum_{k: k \neq i, j} m_j m_k \pi_k P_{kj} \right]\right\}  \nonumber \\
        =& \sum_{i, j \neq i} P_{ji} \sum_{m=3}^{n} \sum_{\substack{\bfzero \prec \m \preceq \n:\\ m_i = 1, |\m|=m}} \left\{ \rule{0mm}{7mm}
        {{\n \choose \m} \over {n \choose m}} {1 \over m^2 (m-1)}  \right.  \nonumber \\
        &\hspace{2.5cm}\times  \left.\left[ \sum_{k: k \neq i, j} m_j(m_j+1) \pi_{j} P_{jk} + \sum_{k: k \neq i, j} m_j m_k \pi_k P_{kj} \right]\right\}  \nonumber \\        
 		=& \sum_{i, j, k \text{ distinct}}  \sum_{m=3}^{n} \sum_{\substack{\bfzero \prec \m \preceq \n:\\ m_i = 1,|\m|=m}} {{\n \choose \m} \over {n \choose m}} 
{\pi_j P_{ji} P_{jk}  m_j (m_j + 1) + \pi_k P_{kj} P_{ji}m_j m_k \over m^2 (m-1)}, \label{eq:rt_O3_1}
\end{align}
where in the second equality, the formula from \thmref{thm:O2} is used,  noting that $|\mathcal{O}_{\m-\e_i+\e_j}|=2$.
If we define the quantities $\alpha(\n, i, j, k)$ and $\beta(\n, i, j, k)$ as 
\begin{align}
\alpha(\n, i, j, k) = \sum_{m = 3}^{n} {1 \over m^2 (m-1)} \sum_{\substack{\bfzero \prec \m \preceq \n: \\ m_i = 1, |\m|=m}} {{\n \choose \m} \over {n \choose m}} m_j (m_j + 1),	\label{eq:alpha_O3}
\end{align}
and 
\[
\beta(\n, i, j, k) = \sum_{m = 3}^{n} {1 \over m^2 (m-1)} \sum_{\substack{\bfzero \prec \m \preceq \n: \\ m_i = 1, |\m|=m}} {{\n \choose \m} \over {n \choose m}} m_j m_k,
\]
then \eqref{eq:rt_O3_1} can be rewritten as
\begin{align}
\rt(\n) = \sum_{i, j, k \text{ distinct}} \pi_j P_{ji} P_{jk} \alpha(\n, i, j, k) + \sum_{i, j, k \text{ distinct}} \pi_k P_{kj} P_{ji} \beta(\n, i, j, k).	\label{eq:rt_O3_breakup}
\end{align}
Now consider $\alpha(\n, i, j, k)$ defined by \eqref{eq:alpha_O3}.
We can remove the restriction in the inner sum that $m_i = 1$ by defining $\m' = \m - \e_i$, and so $|\m'| = m-1$. Also, since $j \neq i$ in \eqref{eq:rt_O3_breakup}, $m'_j = m_j$. Making this change of variables from $\m$ to $\m'$ in the inner sum of \eqref{eq:alpha_O3}, we get
\begin{align}
\sum_{\substack{\bfzero \prec \m \preceq \n: \\ m_i = 1, |\m| = m}} {{\n \choose \m} \over {n \choose m}} m_j (m_j + 1) = 
{{n - n_i \choose m - 1} \over {n \choose m}} n_i \sum_{\substack{\bfzero \prec \m' \preceq \n - n_i \e_i: \\ |\m'|=m-1}} {{\n - n_i \e_i \choose \m'} \over {n - n_i \choose m - 1}} m'_j (m'_j + 1). \label{eq:rt_O3_1_1}
\end{align}
Using identity \eqref{id:second_moment_tweak} in \factref{fact:restricted_moment} of the Appendix, the summation over $\m'$ in \eqref{eq:rt_O3_1_1} can be written as
\begin{align}
\lefteqn{\sum_{\substack{\bfzero \prec \m' \preceq \n - n_i \e_i: \\ |\m'|=m-1}} {{\n - n_i \e_i \choose \m'} \over {n - n_i \choose m - 1}} m'_j (m'_j + 1)}\hspace{20mm}	\nonumber \\
={} & \sum_{\substack{T \subseteq [L]: \\ i, j \notin T}} (-1)^{|T|} \left[ {\fall{n_j}{2} \fall{m-1}{2} \over \fall{n-n_i-n_T}{2}} + {2 n_j (m-1) \over n - n_i - n_T} \right] {{n - n_i - n_T \choose m-1} \over {n - n_i \choose m-1}}	\label{eq:rt_O3_1_2}
\end{align}
The only sets $T$ satisfying the conditions in the summation in \eqref{eq:rt_O3_1_2} are $T = \varnothing$ and $T = \{k\}$. Hence, substituting \eqref{eq:rt_O3_1_1} and \eqref{eq:rt_O3_1_2} in \eqref{eq:alpha_O3}, we have
\begin{align}
\alpha(\n, i, j, k)  
&={}  \sum_{m = 3}^{n} {1 \over m^2 (m-1)} \sum_{\substack{\bfzero \prec \m \preceq \n: \\ m_i = 1, |\m|=m}} {{\n \choose \m} \over {n \choose m}} m_j (m_j + 1)   \nonumber \\
&={} \sum_{m=3}^{n} {1 \over m^2 (m-1)} {{n - n_i \choose m - 1} \over {n \choose m}} n_i
    \sum_{\substack{\bfzero \prec \m' \preceq \n - n_i \e_i: \\ |\m'|=m-1}} {{\n - n_i \e_i \choose \m'} \over {n - n_i \choose m-1}} m'_j(m'_j+1)  \nonumber \\    
&={} \sum_{m=3}^{n} {n_i {n - n_i \choose m - 1} \over m^2 (m-1) {n \choose m}} \left\{
    {{n_j + n_k \choose m - 1} \over {n-n_i \choose m-1}} \left[ {\fall{n_j}{2} \over \fall{n_j + n_k}{2}} \fall{m-1}{2} + 2 {n_j(m-1) \over n_j + n_k}  \right]  \right. \nonumber \\
&   \hspace{1.6in} \left. - {{n_j \choose m-1} \over {n-n_i \choose m-1}} \left[ {\fall{n_j}{2} \over \fall{n_j}{2}} \fall{m-1}{2} + 2 {n_j \over n_j} (m-1) \right]  \right\}	\nonumber \\
&={} \sum_{m=3}^{n} {n_i \over m^2 (m-1)} \left\{
    {{n_j + n_k \choose m - 1} \over {n \choose m}} \left[ {\fall{n_j}{2} \over \fall{n_j + n_k}{2}} \fall{m-1}{2} + 2 {n_j  (m-1)\over n_j + n_k} \right]	\right. \nonumber \\
&   \hspace{3cm} \left. - {{n_j \choose m-1} \over {n \choose m}} m (m - 1) \right\}   \nonumber \\
&={} \sum_{m=1}^{n} {n_i \over n} \left\{ {{n_j + n_k \choose m} \over {n-1 \choose m}} 
     \left[ {\fall{n_j}{2} \over \fall{n_j + n_k}{2}} {m-1 \over m+1} + 2 {n_j \over n_j + n_k} {1 \over m+1} \right]
          - {{n_j \choose m} \over {n-1 \choose m}} \right\}. \label{eq:rt_O3_2}
\end{align}
Applying Facts~\ref{fact:ratio} and \ref{fact:ratio2} in the Appendix  to \eqref{eq:rt_O3_2} yields
\begin{align}
\alpha(\n, i, j, k) 
&= \sum_{m=1}^{n} {n_i \over n} \left[ 
     {\fall{n_j}{2} \over \fall{n_j + n_k}{2}} {{n_j + n_k \choose m} \over {n-1 \choose m}} - {{n_j \choose m} \over {n-1 \choose m}}
   + {2 n_j n_k \over \fall{n_j + n_k}{2}} {{n_j + n_k \choose m} \over {n-1 \choose m}} {1 \over m+1} \right]  \nonumber \\
&= {n_i \over n} \left\{ {\fall{n_j}{2} \over \fall{n_j + n_k}{2}} {n_j + n_k \over n_i} - {n_j \over n_i + n_k} \right. \nonumber \\
&  \hspace{0.8cm} \left. + 2 {n_j n_k \over \fall{n_j + n_k}{2}} \left[{n \over n_j + n_k + 1} (H_n - H_{n_i - 1}) - 1 \right] \right\}   \nonumber \\
&= {\fall{n_j}{2} \over n (n_j + n_k - 1)} - {n_i n_j \over n (n_i + n_k)} - 2 {n_i n_j n_k \over n \fall{n_j + n_k}{2}} \nonumber \\
			& \hspace{0.3cm} + 2 {n_i n_j n_k \over \fall{n_j + n_k + 1}{3}} (H_n - H_{n_i - 1}). \label{eq:rt_O3_Aexp}
\end{align}
Following a similar line of computation as above, we can find a closed-form expression for $\beta(\n, i, j, k)$ as follows:
\begin{align}
\beta(\n, i, j, k)  
&{}={}  \sum_{m = 3}^{n} {1 \over m^2 (m-1)} \sum_{\substack{\bfzero \prec \m \preceq \n: \\ m_i = 1, |\m|=m}} {{\n \choose \m} \over {n \choose m}} m_j m_k  \nonumber \\
&={}  \sum_{m = 3}^{n} {1 \over m^2 (m-1)} {{n - n_i \choose m - 1} \over {n \choose m}} n_i \sum_{\substack{\bfzero \prec \m' \preceq \n - n_i \e_i: \\ |\m'|=m-1}} {{\n - n_i \e_i \choose \m'} \over {n - n_i \choose m - 1}} m'_j m'_k \nonumber \\
&={}  \sum_{m = 3}^{n} {1 \over m^2 (m-1)} {{n_j + n_k \choose m - 1} \over {n \choose m}} n_i {n_j n_k \over \fall{nj+n_k}{2}} \fall{m-1}{2}	\nonumber \\
&={}  \sum_{m = 1}^{n} {n_i \over n} {n_j n_k \over \fall{nj+n_k}{2}} {{n_j + n_k \choose m} \over {n \choose m}} \left( 1 - {2 \over m+1} \right)  \nonumber \\
&={}  {n_i \over n} {n_j n_k \over \fall{nj+n_k}{2}} \left\{{n_j + n_k \over n_i} - 2 \left[{n \over n_j+n_k+1} (H_n - H_{n_i - 1}) - 1\right] \right\}    \nonumber \\
&={}  {n_j n_k \over n (n_j + n_k - 1)} + 2 {n_i n_j n_k \over n \fall{n_j + n_k}{2}} - 2 {n_i n_j n_k \over \fall{n_j + n_k + 1}{3}} (H_n - H_{n_i - 1}), \label{eq:rt_O3_Bexp}
\end{align}
where the second equality above is the same change of variables from $\m$ to $\m' = \m - \e_i$ as in the $\alpha(\n, i, j, k)$ term. The third equality follows from identity \eqref{id:second_moment_mixed} in \factref{fact:restricted_moment}, and the second to last equality follows from Facts~\ref{fact:ratio} and \ref{fact:ratio2}.
Substituting \eqref{eq:rt_O3_Aexp} and \eqref{eq:rt_O3_Bexp} into \eqref{eq:rt_O3_breakup}, and using \eqref{eq:qt_rt} gives
\begin{eqnarray}
\qt(\n) &=& \Lambda(\n) \sum_{i, j, k \text{ distinct}} \left[ \pi_j P_{ji} P_{jk} \alpha(\n, i, j, k) + \pi_k P_{kj} P_{ji} \beta(\n, i, j, k)\right] \nonumber \\
&=& \Lambda(\n)  \sum_{i, j, k \text{ distinct}} \left\{\pi_j P_{ji} P_{jk} \left[ {\fall{n_j}{2} \over n (n_j + n_k - 1)} - {n_i n_j \over n (n_i + n_k)} - 2 {n_i n_j n_k \over n \fall{n_j + n_k}{2}} \right. \right. \nonumber \\
		&& \hspace{4.2cm} \left. + 2 {n_i n_j n_k \over \fall{n_j + n_k + 1}{3}} (H_n - H_{n_i - 1}) \right] \nonumber \\
        && \phantom{\Lambda(\n) \sum_{i, j, k \text{ distinct}}} +  \pi_k P_{kj} P_{ji} \left[ {n_j n_k \over n (n_j + n_k - 1)} + 2 {n_i n_j n_k \over n \fall{n_j + n_k}{2}} \right.	\nonumber \\
		&& \hspace{4.2cm}  \vphantom{\sum_{i, j, k \text{ distinct}}} \left. - 2 {n_i n_j n_k \over \fall{n_j + n_k + 1}{3}} (H_n - H_{n_i - 1}) \right] \Bigg\}.	\label{eq:rt_O3_qt}
\end{eqnarray}
Note that if $\bfP$ is reversible when restricted to the observed alleles $\Oset$, then \eqref{eq:rt_O3_qt} simplifies to the expression
given in \corref{cor:O3reversible}.
\hfill{}\qed

\subsection{Proof of \thmref{thm:O4}  ($\O=4$) \label{sec:combinatorial_proof_O4}}
Using \corref{cor:O3reversible}, we first note the following alternate expression for $\rt(\n)$ when $\O = 3$ and $\bfP$ is reversible restricted to the observed alleles:
\begin{equation} \label{id:cn3_symmetrized}
\rt(\n) = \sum_{i, j, k \text{ distinct}} {n_i \over n} \pi_i {P_{ij} P_{ik} \over 2}.
\end{equation}
Suppose $\O=4$ and assume that $\bfP$ is reversible restricted to the observed alleles $\Oset$.
Then using \propref{prop:rt_recursion}, we obtain
\begin{eqnarray}
\rt(\n) &=& \sum_{l, h \neq l} P_{hl} \sum_{\substack{\bfzero \prec \m \preceq \n: \\ m_l = 1}} {{\n \choose \m} \over {n \choose m}} {m_h \rt(\m - \e_l + \e_h) \over m (m-1)}    \nonumber \\
        &=& \sum_{l, h \neq l} P_{hl} \sum_{\substack{\bfzero \prec \m \preceq \n: \\ m_l = 1}} {{\n \choose \m} \over {n \choose m}} {m_h\over m (m-1)} 
\sum_{\substack{i, j, k \text{ distinct}, \\ i, j, k \neq l}} {m_i + \delta_{i,h} \over m} \pi_i {P_{ij} P_{ik} \over 2}    \nonumber \\
        &=& \sum_{i, j, k, l \text{ distinct}} {1 \over 2} \pi_i P_{ij} P_{ik} P_{il} \sum_{\substack{\bfzero \prec \m \preceq \n \\ m_l = 1}} {{\n \choose \m} \over {n \choose m}} {m_i (m_i + 1) \over m^2 (m-1)} \nonumber \\
        &&  + \sum_{i, j, k, l \text{ distinct}} \pi_i P_{ij} P_{ik} P_{jl} \sum_{\substack{\bfzero \prec \m \preceq \n \\ m_l = 1}} {{\n \choose \m} \over {n \choose m}} {m_i m_j \over m^2 (m-1)}, \label{eq:rt_O4_1}
\end{eqnarray}
where the second equality follows from using \eqref{id:cn3_symmetrized} since $\bfP$ is reversible when restricted to the alleles $\{i, j, k\}\subset\Oset$. Similar to the proof in \sref{sec:combinatorial_proof_O3}, if we define quantities $\zeta(\n, i, j, k, l)$ and $\delta(\n, i, j, k, l)$ as
\[
\zeta(\n, i, j, k, l) = \sum_{m = 4}^{n} {1 \over m^2 (m-1)} \sum_{\substack{\bfzero \prec \m \preceq \n: \\ m_l = 1, |\m|=m}} {{\n \choose \m} \over {n \choose m}} m_i (m_i + 1),
\]
and 
\[
\delta(\n, i, j, k, l) = \sum_{m = 4}^{n} {1 \over m^2 (m-1)} \sum_{\substack{\bfzero \prec \m \preceq \n: \\ m_l = 1, |\m|=m}} {{\n \choose \m} \over {n \choose m}} m_i m_j,
\]
then, using \eqref{eq:qt_rt} and \eqref{eq:rt_O4_1}, we obtain the following expression for $\qt(\n)=\Lambda(\n)\rt(\n)$:
\begin{align}
\qt(\n) = \Lambda(\n) \sum_{i, j, k, l \text{ distinct}}  \left[ \pi_i P_{ij} P_{ik} P_{il} {\zeta(\n, i, j, k, l) \over 2}
     + \pi_i P_{ij} P_{ik} P_{jl} \delta(\n, i, j, k, l) \right]. \label{eq:rt_O4_2}
\end{align}
By a very similar calculation to that in \sref{sec:combinatorial_proof_O3}, using Facts~\ref{fact:ratio} and \ref{fact:ratio2}, and  identities \eqref{id:second_moment_tweak} and \eqref{id:second_moment_mixed} in \factref{fact:restricted_moment} of the Appendix, we obtain the following closed-form expressions for $\zeta(\n, i, j, k, l)$ and $\delta(\n, i, j, k, l)$:
\begin{align*}
\lefteqn{\zeta(\n, i, j, k, l)}\hspace{10mm}\\
={}&    {n_l \over n} \left\{ {n_i + n_j + n_k \over n_l} {\fall{n_i}{2} \over \fall{n_i + n_j + n_k}{2}}  + {n_i \over n_j + n_k + n_l} \right. \\
& \hspace{1mm}  + {2 n_i (n_j + n_k) \over \fall{n_i + n_j + n_k}{2}} \left({n \over n_i + n_j + n_k + 1} (H_n - H_{n_l - 1}) -1 \right)     \\
& \hspace{1mm}  - \left[ {n_i + n_j \over n_k + n_l} {\fall{n_i}{2} \over \fall{n_i + n_j}{2}} + {2 n_i n_j \over \fall{n_i + n_j}{2}} \left({n \over n_i + n_j + 1} (H_n - H_{n_k + n_l - 1}) - 1 \right)  \right] \\
& \hspace{1mm} - \left.\left[ {n_i + n_k \over n_j + n_l} {\fall{n_i}{2} \over \fall{n_i + n_k}{2}} + {2 n_i n_k \over \fall{n_i + n_k}{2}} \left({n \over n_i + n_k + 1} (H_n - H_{n_j + n_l - 1}) - 1 \right)  \right]  \right\}.
\end{align*}
and
\begin{align*}
\lefteqn{\delta(\n, i, j, k, l)} \hspace{10mm} \\
={}&
    {n_l \over n} \left\{ {n_i + n_j + n_k \over n_l} {n_i n_j \over \fall{n_i + n_j + n_k}{2}} \right. \\
  &  \hspace{1mm} - {2 n_i n_j \over \fall{n_i + n_j + n_k}{2}} \left({n \over n_i + n_j + n_k + 1} (H_n - H_{n_l - 1}) -1 \right) \\
{}& \hspace{1mm}- \left.\left[ {n_i + n_j \over n_k + n_l} {n_i n_j \over \fall{n_i + n_j}{2}} - {2 n_i n_j \over \fall{n_i + n_j}{2}} \left({n \over n_i + n_j + 1} (H_n - H_{n_k + n_l - 1}) - 1 \right)  \right] \right\}.
\end{align*}
Simplifying the expression for $\delta(\n, i, j, k, l)$, we get the expression stated in \thmref{thm:O4}.
Observing that $\zeta(\n, i, j, k, l)$ is symmetric in $j$ and $k$, we see that for all $i, j, k$, and $l$ distinct in $\Oset$, 
\[
{\zeta(\n, i, j, k, l) + \zeta(\n, i, k, j, l) \over 2} = \gamma(\n, i, j, k, l) + \gamma(\n, i, k, j, l),
\]
where $\gamma(\n, i, j, k, l)$ is given by:
\begin{align*}
\gamma(\n, i, j, k, l) {}={}& {n_i \over n} \Bigg\{ \bigg[ {n_i - 1 \over 2 (n_i + n_j + n_k - 1)} - {2 n_j n_l \over \fall{n_i + n_j + n_k}{2}} \bigg] + {n_l \over 2(n_j + n_k + n_l)} \\
	&    \phantom{{n_i \over n} \left[ \right.}- \bigg[ {n_l (n_i - 1) \over (n_k + n_l)(n_i + n_j - 1)} - {2 n_j n_l \over \fall{n_i + n_j}{2}} \bigg] \Bigg\} \\
	& \hspace{-5mm} + {2 n_i n_j n_l \over \fall{n_i + n_j + n_k + 1}{3}} (H_n - H_{n_l - 1}) - {2 n_i n_j n_l \over \fall{n_i + n_j + 1}{3}} (H_n - H_{n_k + n_l - 1}).
\end{align*}
Using the fact that $\pi_i P_{ij} P_{ik} P_{il}$ is also symmetric in $j$ and $k$, we can then rewrite \eqref{eq:rt_O4_2} as
\[
\qt(\n) = \Lambda(\n) \sum_{i, j, k, l \text{ distinct}}  \left[ \pi_i P_{ij} P_{ik} P_{il} \gamma(\n, i, j, k, l)
     + \pi_i P_{ij} P_{ik} P_{jl} \delta(\n, i, j, k, l) \right].
\tag*{\hfill{}\qed}
\]

\section{Empirical study of accuracy \label{sec:numerical_accuracy}}
Here, we investigate the accuracy of approximating the sampling probability $q(\n)$ by using only the leading order term $\theta^{\O-1}\qt(\n)$.
In this study, we solve the recursion \eqref{eq:main_rec} numerically to obtain the true sampling probability $q(\n)$ for moderate sample sizes.

For a given sample $\n$, define the approximate sampling probability, $\qapprox(\n)$, by
\[
\qapprox(\n) = \theta^{\O - 1}\qt(\n).
\]
We can then define the relative error, $\err(\n)$, of the approximation $\qapprox(\n)$ from the true sampling probability $q(\n)$ as
\[
\err(\n) = {\lvert q(\n) - \qapprox(\n) \rvert \over q(\n)}.
\]
For a given sample size $n$, another natural measure of the approximation quality is the expected relative error under the distribution arising from the coalescent on samples of size $n$. Since $q(\n)$ is the probability of a particular ordered sample consistent with $\n$, the probability $p(\n)$ of the unordered sample $\n$, when sampling order is ignored, is given by 
\[
p(\n) = {n \choose n_1, \ldots, n_K} q(\n).
\]
We can then define the expected relative error for a sample size $n$ by $\exerr(n)$, given by
\begin{align*}
\exerr(n)   &= \sum_{\n: |\n| = n} p(\n) \err(\n)  = \sum_{\n: |\n| = n} {n \choose n_1, \ldots, n_K} \lvert q(\n) - \qapprox(\n) \rvert.
\end{align*}
We also define the worst-case relative error, $\worsterr(n)$, for a given sample size $n$ as the worse relative error among all samples of size $n$. Specifically,
\begin{align*}
\worsterr(n) &= \max_{\n: |\n| = n} \err(\n)  = \max_{\n: |\n| = n} {\lvert q(\n) - \qapprox(\n) \rvert \over q(\n)}.
\end{align*}

To study the accuracy of approximating $q(\n)$ by $\qapprox(\n)$, we examine the behavior of $\exerr(n)$ and $\worsterr(n)$ for a transition matrix estimated from real biological data.  Specifically, we use the reversible phylogenetic mutation rate matrix estimated in \cite[Table 1, matrix (1)]{yang:1994}  for the $\psi\eta$-globin pseudogenes of six primate species. Since their estimated matrix is a matrix of nucleotide substitution rates used for phylogenetic analysis, we rescale it by the minimum amount that can make it a valid Markov transition matrix. This rescaled matrix, denoted by $\bfPhat$, is given below to three digits of precision, and is used in our numerical experiments with different values of the mutation parameter $\theta$:
\begin{equation}
\bfPhat =
\left(
\begin{tabular}{llll}
0.433    & 0.398  & 0.074  & 0.095 \\
0.665    & 0.000  & 0.164  & 0.171  \\
0.074    & 0.098  & 0.394  & 0.434  \\ 
0.147    & 0.159  & 0.674  & 0.020
\end{tabular}
\right), \label{zhang_1a}
\end{equation}
in the $(T,C,A,G)$ basis.
The stationary distribution corresponding to this transition matrix is $\bfpihat=(0.308, 0.185, 0.308, 0.199)$ to three digits of precision.

\begin{figure}
\subfigure[]{
\includegraphics[trim=4mm 0mm 0mm 3mm,width=0.49\textwidth]{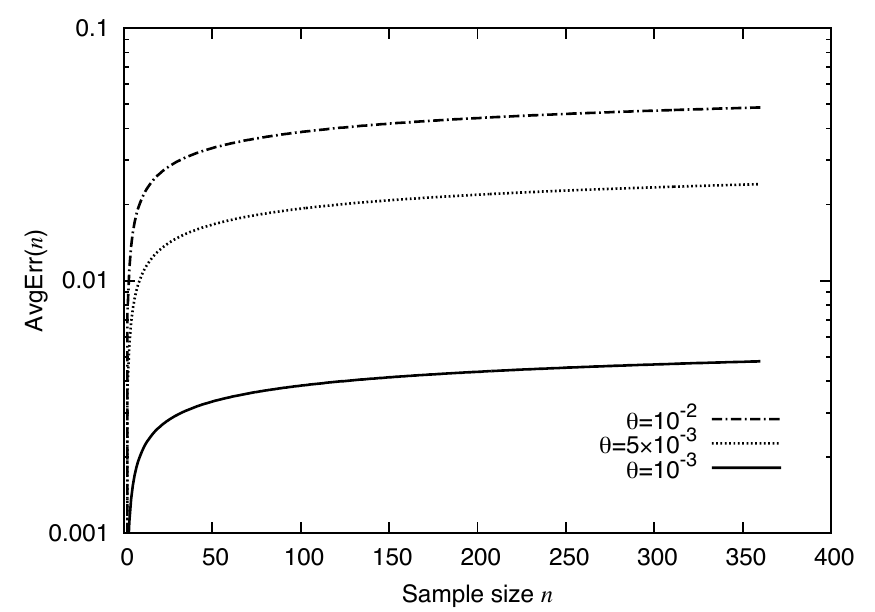}
\label{fig:avgerr}
}
\subfigure[]{
\includegraphics[trim=4mm 0mm 0mm 3mm,width=0.49\textwidth]{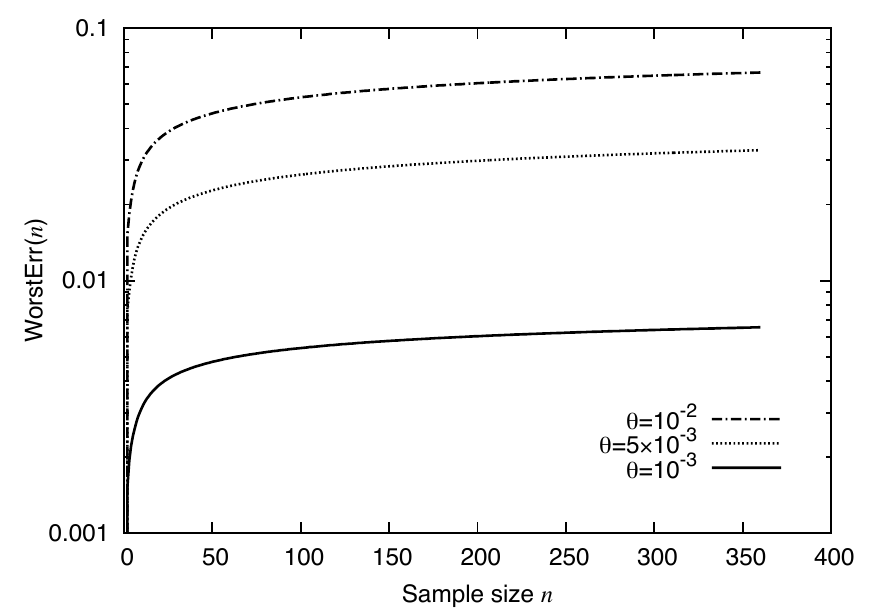}
\label{fig:worsterr}
}
\caption{Error plots as a function of the sample size $n$, for the transition matrix $\bfPhat$ in \eqref{zhang_1a} and mutation rate $\theta \in \{ 10^{-3}, 5 \times 10^{-3}, 10^{-2} \}$.
\subref{fig:avgerr}
The expected relative error, $\exerr(n)$.
\subref{fig:worsterr}
The worst-case relative error, $\worsterr(n)$.
}
\end{figure}

For many neutral regions of the human genome, typical mutation rates per base are in the range $10^{-3} \leq \theta \leq 10^{-2}$ \cite{nac:cro:2000}, and we consider $\theta\in\{10^{-3},5\times 10^{-3},10^{-2}\}$ in our study.  For the transition matrix in \eqref{zhang_1a}, the expected relative error $\exerr(n)$ and the worst-case relative error $\worsterr(n)$ are plotted in Figures~\ref{fig:avgerr} and \ref{fig:worsterr}, respectively, as  functions of the sample size $n$.
As can be seen from the plots, both the expected relative error and the worst-case relative error grow very slowly with the sample size $n$. 
Further, the ratio of $\worsterr(n)$ to $\exerr(n)$ is a small number between $1.3$ and $2.1$ for all $n \leq 360$, and is decreasing in $n$. Hence, it appears that the approximation quality of $\qapprox(\n)$ is uniformly good over all samples $\n$ for any given size $n$.

\section*{Acknowledgments}
We thank Paul Jenkins for useful discussion.  This research is supported in part by an NIH grant R01-GM094402, an Alfred P. Sloan Research Fellowship, and a Packard Fellowship for Science and Engineering.

\section*{Appendix}
Here, we provide some general combinatorial identities which are used several times for proving the main results in this paper.


\begin{fact} \label{fact:ratio}
For any positive integers $x, y, a$ and $b$ where $b \leq a$ and $x \leq y$,
\begin{equation}
\sum_{m=x}^{y} {{b \choose m} \over {a \choose m}} = {{a+1-x \choose a+1-b} - {a-y \choose a+1-b} \over {a \choose b}}. \label{id:ratio}
\end{equation}
\end{fact}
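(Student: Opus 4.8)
The plan is to reduce the sum to a single telescoping (hockey-stick) identity by first rewriting each summand as a single binomial coefficient. The key algebraic observation is that, for integers $0\le m\le a$,
\[
\frac{\binom{b}{m}}{\binom{a}{m}}
= \frac{b!\,(a-m)!}{a!\,(b-m)!}
= \frac{1}{\binom{a}{b}}\,\binom{a-m}{a-b},
\]
which one checks by expanding everything into factorials and using $\binom{a}{b}=\binom{a}{a-b}$. Under the standard convention $\binom{n}{k}=0$ for integers $n<k$ (including $n<0$), this identity continues to hold for $m>a$ as well, both sides being $0$; hence it is valid for every $m$ with $x\le m\le y$.

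Granting this, I would pull the constant $1/\binom{a}{b}$ out of the sum and substitute $k=a-m$, so that $\sum_{m=x}^{y}\binom{a-m}{a-b}$ becomes $\sum_{k=a-y}^{a-x}\binom{k}{a-b}$. Applying Pascal's rule in the form $\binom{k}{a-b}=\binom{k+1}{a-b+1}-\binom{k}{a-b+1}$, this sum telescopes:
\[
\sum_{k=a-y}^{a-x}\binom{k}{a-b}
= \binom{a-x+1}{a-b+1}-\binom{a-y}{a-b+1}.
\]
Writing $a-x+1=a+1-x$ and $a-b+1=a+1-b$, dividing by $\binom{a}{b}$, and reinstating the factor gives exactly the right-hand side of \eqref{id:ratio}.

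The argument is essentially routine; the only points needing care are the change of summation limits under $k=a-m$ and verifying that the boundary terms behave correctly under the zero convention — in particular that $\binom{a-y}{a+1-b}$ vanishes when $a-y<0$, so that the formula degenerates to $\binom{a+1-x}{a+1-b}/\binom{a}{b}$ in the regime ($y$ large) that actually occurs in the applications in \sref{sec:combinatorial_proof_O3} and \sref{sec:combinatorial_proof_O4}. I expect this bookkeeping to be the only (minor) obstacle.
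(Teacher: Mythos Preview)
Your proposal is correct and follows essentially the same route as the paper: both rewrite $\binom{b}{m}/\binom{a}{m}=\binom{a-m}{a-b}/\binom{a}{b}$ and then apply the hockey-stick identity to collapse the sum. The only cosmetic difference is that the paper simply cites $\sum_{i=a}^{n}\binom{n-i}{k}=\binom{n-a+1}{k+1}$ as a standard identity, whereas you derive it explicitly via Pascal's rule telescoping after the substitution $k=a-m$.
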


\begin{proof}
Starting from the left hand side of \eqref{id:ratio}, we have:
\begin{eqnarray*}
\sum_{m=x}^{y} {{b \choose m} \over {a \choose m}} 
    &=& {b! (a-b)! \over a!} \sum_{m=x}^{y} {a-m \choose a-b} \\
    &=& {{a+1-x \choose a+1-b} - {a-y \choose a+1-b} \over {a \choose b}},
\end{eqnarray*}
where the last equality follows from the standard combinatorial identity that for all positive integers $a, n$, and $k$, 
\[
\sum_{i=a}^{n}{n-i \choose k} = {n-a+1 \choose k+1}.
\tag*{\hfill{}\qed}
\]
\end{proof}



\begin{fact}    \label{fact:comb_harmonic}
For positive integers $a$ and $b$,
\begin{equation*}
  \sum_{m=1}^a {1 \over m} {a - m \choose b} = {a \choose b} (H_a - H_b).
\end{equation*}
\end{fact}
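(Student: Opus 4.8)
The plan is to prove the identity by induction on $a$, establishing it for \emph{all} integers $b \ge 0$ at each stage, and adopting the convention that $\binom{n}{k} = 0$ whenever $k < 0$ or $n < k$ (so in particular $\binom{n}{k}=0$ for $n<0$); this convention lets every index range be written uniformly. The base cases are immediate: for $b=0$ one has $\sum_{m=1}^{a}\tfrac1m\binom{a-m}{0} = \sum_{m=1}^{a}\tfrac1m = H_a = \binom{a}{0}(H_a-H_0)$, and for $a=1$ (or for any $a \le b$) both sides vanish at once when $b\ge1$.

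For the inductive step I would fix $a$, assume the identity at $a$ for every $b\ge0$, and consider $S(a{+}1,b) := \sum_{m=1}^{a+1}\tfrac1m\binom{a+1-m}{b}$. Splitting off the $m=a+1$ term and applying Pascal's rule $\binom{a+1-m}{b}=\binom{a-m}{b}+\binom{a-m}{b-1}$ for $1\le m\le a$ gives
\[
S(a{+}1,b) \;=\; \sum_{m=1}^{a}\tfrac1m\binom{a-m}{b} \;+\; \sum_{m=1}^{a}\tfrac1m\binom{a-m}{b-1} \;+\; \tfrac{1}{a+1}\binom{0}{b}.
\]
The first two sums are exactly $S(a,b)$ and $S(a,b-1)$, which the inductive hypothesis evaluates (the case $b-1=0$ is covered because we carry all $b'\ge0$), and the last term vanishes for $b\ge1$. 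Hence it remains to verify the purely algebraic identity
\[
\binom{a}{b}(H_a-H_b) + \binom{a}{b-1}(H_a-H_{b-1}) \;=\; \binom{a+1}{b}(H_{a+1}-H_b), \qquad b\ge1.
\]

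To finish, I would expand the right-hand side with $\binom{a+1}{b}=\binom{a}{b}+\binom{a}{b-1}$, move everything to one side, and simplify using $H_{a+1}-H_a=\tfrac1{a+1}$ and $H_b-H_{b-1}=\tfrac1b$; the discrepancy collapses to $\tfrac{1}{a+1}\binom{a+1}{b}-\tfrac1b\binom{a}{b-1}$, which is $0$ by the standard absorption identity $\tfrac{1}{a+1}\binom{a+1}{b}=\tfrac{a!}{b!\,(a+1-b)!}=\tfrac1b\binom{a}{b-1}$. I do not expect a genuine obstacle here: the argument is routine, and the only thing requiring care is the boundary bookkeeping (the degenerate values $b=0$, $b-1=0$, and $a-m<0$), which the stated conventions handle automatically; an alternative that sidesteps the conventions is a generating-function proof comparing $\big(\sum_{m\ge1}\tfrac{x^m}{m}\big)\big(\sum_{j\ge0}\binom{j}{b}x^j\big)=\tfrac{-\ln(1-x)\,x^b}{(1-x)^{b+1}}$ with the generating function of the right-hand side.
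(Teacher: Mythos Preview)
Your inductive argument is correct: the base cases are handled, Pascal's rule gives the recursion $S(a{+}1,b)=S(a,b)+S(a,b{-}1)$ for $b\ge 1$, and the resulting algebraic identity reduces cleanly to the absorption identity $\tfrac{1}{a+1}\binom{a+1}{b}=\tfrac{1}{b}\binom{a}{b-1}$. The paper does not actually give a proof of this fact---it simply states that the identity can be verified by induction (citing \cite{fu:1995}) or by the Wilf--Zeilberger method (citing \cite{gri:2003})---so your proposal is exactly the kind of induction the paper alludes to, with the details filled in.
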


\factref{fact:comb_harmonic} can be verified by induction \cite{fu:1995} or by the method of Wilf-Zeilberger pairs \cite{gri:2003}.


\begin{fact}\label{fact:ratio2}
For positive integers $a$ and $b$ where $b \leq a$,
\begin{equation}
\sum_{m=1}^{b} {{b \choose m} \over {a \choose m}} {1 \over m+1} = {a+1 \over b+1} (H_{a+1} - H_{a-b}) - 1. \label{id:ratio2}
\end{equation}
\end{fact}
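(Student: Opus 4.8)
The plan is to reduce \eqref{id:ratio2} to \factref{fact:comb_harmonic} by two elementary rewrites. First I would expand the binomial ratio in factorials to obtain ${b \choose m}/{a \choose m} = {a - m \choose a - b}/{a \choose b}$, which is valid throughout the summation range $1 \le m \le b$ since there $0 \le b - m$. This pulls the $m$-independent factor $1/{a \choose b}$ outside the sum, reducing the left-hand side to
\[
\frac{1}{{a \choose b}} \sum_{m=1}^{b} \frac{1}{m+1} {a - m \choose a - b}.
\]

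Next I would shift the index by setting $m' = m+1$, so that $1/(m+1) = 1/m'$ and ${a-m \choose a-b} = {(a+1) - m' \choose a-b}$; the sum becomes $\sum_{m'=2}^{b+1} \frac{1}{m'} {(a+1) - m' \choose a-b}$. The key observation is that the summation range may be extended to $m' = 1, \dots, a+1$ at no cost: each term with $m' \ge b+2$ vanishes because ${(a+1)-m' \choose a-b} = 0$ once $(a+1) - m' < a - b$, while the term $m' = 1$ contributes ${a \choose a-b} = {a \choose b}$. Applying \factref{fact:comb_harmonic} with the pair $(a+1,\, a-b)$ in place of $(a,b)$ then gives
\[
\sum_{m'=1}^{a+1} \frac{1}{m'} {(a+1) - m' \choose a-b} = {a+1 \choose a-b} (H_{a+1} - H_{a-b}),
\]
whence $\sum_{m'=2}^{b+1} \frac{1}{m'} {(a+1)-m' \choose a-b} = {a+1 \choose a-b}(H_{a+1} - H_{a-b}) - {a \choose b}$.

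Combining the two steps and dividing through by ${a \choose b}$, the left-hand side of \eqref{id:ratio2} equals $\frac{{a+1 \choose a-b}}{{a \choose b}} (H_{a+1} - H_{a-b}) - 1$, and the stated form follows from ${a+1 \choose a-b} = {a+1 \choose b+1} = \frac{a+1}{b+1} {a \choose b}$, which collapses the leading coefficient to $(a+1)/(b+1)$. I do not expect a genuine obstacle here; the only points requiring care are the boundary bookkeeping when extending the summation range and the degenerate case $a = b$, where $a - b = 0$ and \factref{fact:comb_harmonic} must be read with $H_0 = 0$ — one checks directly that it then reduces to $\sum_{m'=1}^{a+1} 1/m' = H_{a+1}$, so both sides of \eqref{id:ratio2} still agree.
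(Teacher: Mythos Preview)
Your proposal is correct and follows essentially the same route as the paper: rewrite the binomial ratio as $\binom{a-m}{a-b}/\binom{a}{b}$, shift the index by one, restore the $m'=1$ term, apply \factref{fact:comb_harmonic} with $(a+1,a-b)$, and simplify via $\binom{a+1}{b+1}/\binom{a}{b}=(a+1)/(b+1)$. The only cosmetic difference is that you explicitly extend the upper summation limit to $a+1$ and discuss the $a=b$ boundary, whereas the paper leaves the upper limit at $b+1$ and implicitly uses that the additional terms vanish when invoking \factref{fact:comb_harmonic}.
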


\begin{proof}
Starting from the left hand side of \eqref{id:ratio2}, we have:
\begin{eqnarray*}
\sum_{m=1}^{b} {{b \choose m} \over {a \choose m}} {1 \over m+1} 
    &=& {b! (a-b)! \over a!} \sum_{m=1}^{b} {a-m \choose a-b} {1 \over m+1}  \\
    &=& {1 \over {a \choose b}} \sum_{m=2}^{b+1} {a+1-m \choose a-b} {1 \over m}  \\
    &=& {1 \over {a \choose b}} \left[ \sum_{m=1}^{b+1} {a+1-m \choose a-b} {1 \over m}  -  {a \choose b} \right] \\
    &=& {1 \over {a \choose b}} \left[ {a+1 \choose b+1} (H_{a+1} - H_{a-b}) - {a \choose b} \right]  \\
    &=& {a+1 \over b+1} (H_{a+1} - H_{a-b}) - 1,
\end{eqnarray*}
where the fourth equality follows from using \factref{fact:comb_harmonic}.
\hfill{} \qed
\end{proof}


We also list some facts about the moments of a hypergeometric distribution which are appealed to several times in the paper.


\begin{fact}    \label{fact:general_moment}
If a multivariate hypergeometric distribution is parameterized by $\n = (n_1, n_2, \ldots, n_L)$, where $n = |\n|$, 
and a sample of size $m$, $\m = (m_1, m_2, \ldots, m_L)$, is drawn from it, then for any  $\t = (t_1, t_2, \ldots, t_L)$ where $t_i \geq 0$ for all $i$, $t = |\t|$ and $t \leq n$,
\begin{equation}\label{id:hyp_moment}
\mathbb{E}\left[ \prod_{i=1}^{L} \fall{m_i}{t_i} \right] = \sum_{\substack{\bfzero \preceq \m \preceq \n: \\ |\m|=m}}{{\n \choose \m} \over {n \choose m}} \prod_{i=1}^{L} \fall{m_i}{t_i} 
= {\prod_{i=1}^{L} \fall{n_i}{t_i} \over \fall{n}{t} } \fall{m}{t}
\end{equation}
\end{fact}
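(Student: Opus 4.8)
The plan is to prove Fact~\ref{fact:general_moment} by a direct counting argument based on linearity of expectation, realizing the multivariate hypergeometric distribution as the outcome of drawing an $m$-subset uniformly at random from a labeled urn of $n$ balls, $n_i$ of which have color $i$. Under this realization, $\binom{\n}{\m}/\binom{n}{m}$ is exactly the probability that the sample has configuration $\m$, so the middle expression in \eqref{id:hyp_moment} equals $\mathbb{E}\big[\prod_{i} \fall{m_i}{t_i}\big]$, and it remains only to evaluate this expectation.

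First I would reinterpret the integrand combinatorially. For a fixed configuration $\m$, the factor $\fall{m_i}{t_i}$ counts the ordered $t_i$-tuples of distinct color-$i$ balls lying in the sample, so $\prod_i \fall{m_i}{t_i}$ counts the ways to choose, simultaneously for every color $i$, such an ordered $t_i$-tuple entirely from within the sample. Writing this count as a sum of indicators, one for each ``admissible'' tuple $b=(b_1,\dots,b_L)$ in the \emph{full} urn (with $b_i$ an ordered $t_i$-tuple of distinct color-$i$ balls), and taking expectations gives
\[
\mathbb{E}\Big[\prod_{i=1}^{L} \fall{m_i}{t_i}\Big] = \sum_{b} \mathbb{P}\big(\text{every ball occurring in } b \text{ lies in the sample}\big).
\]

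The key step is then to observe that, by symmetry of the uniformly random $m$-subset, for any fixed set of $t$ distinct balls the probability that all $t$ of them are drawn equals $\binom{n-t}{m-t}/\binom{n}{m} = \fall{m}{t}/\fall{n}{t}$, which depends only on $t=\sum_i t_i$ and not on which colors those balls have. Since the number of admissible tuples $b$ is $\prod_{i=1}^{L}\fall{n_i}{t_i}$, the sum factorizes into $\big(\prod_i \fall{n_i}{t_i}\big)\,\fall{m}{t}/\fall{n}{t}$, which is the claimed right-hand side; the degenerate cases $t_i>n_i$ or $t>m$ are handled by noting both sides vanish.

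The main obstacle I anticipate is purely bookkeeping: pinning down the bijection between ordered tuples of distinct balls with prescribed color multiplicities and the factor $\prod_i\fall{n_i}{t_i}$, and confirming that the containment probability truly depends on $t$ alone. Both become routine once one fixes the concrete model of the sample as a uniform $m$-subset of an $n$-element labeled set. An alternative, if one prefers to avoid probabilistic language, is to prove the equivalent identity $\sum_{|\m|=m}\binom{\n}{\m}\prod_i\fall{m_i}{t_i}=\big(\prod_i\fall{n_i}{t_i}\big)\binom{n-t}{m-t}$ by induction on $L$, applying the Vandermonde convolution at each step; the counting argument above is essentially the bijective shadow of that computation.
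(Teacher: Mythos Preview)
Your argument is correct. The paper proves the same identity by a purely algebraic route: it rewrites each summand using $\binom{n_i}{m_i}\fall{m_i}{t_i} = \fall{n_i}{t_i}\binom{n_i-t_i}{m_i-t_i}$ (and the analogous identity for $\binom{n}{m}$), pulls the constant $\big(\prod_i\fall{n_i}{t_i}\big)\fall{m}{t}/\fall{n}{t}$ out of the sum, and observes that what remains is the total mass of the multivariate hypergeometric distribution with parameters $\n-\t$ and sample size $m-t$, hence equals~$1$. Your approach instead interprets $\prod_i\fall{m_i}{t_i}$ as a count of ordered, color-respecting tuples inside the sample and evaluates its expectation via indicators and exchangeability. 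The paper's index-shift is a line or two shorter and needs no probabilistic setup; your counting argument, on the other hand, explains \emph{why} the factorial moment factorizes so cleanly and transfers without change to other exchangeable sampling schemes (sampling with replacement, P\'olya urns), where the same symmetry step delivers the analogous factorial moments.
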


\begin{proof}
Starting from the middle term in \eqref{id:hyp_moment}, we get:
\begin{eqnarray*}
\sum_{\substack{\bfzero \preceq \m \preceq \n: \\ |\m|=m}}{{\n \choose \m} \over {n \choose m}} \prod_{i=1}^{L} \fall{m_i}{t_i}
&=&     \sum_{\substack{\bfzero \preceq \m \preceq \n: \\ |\m|=m}} {\prod_{i=1}^{L} \fall{n_i}{t_i} \over \fall{n}{t}} \fall{m}{t} {{\n - \t \choose \m - \t} \over {n - t \choose m - t}}  \\
&=&     {\prod_{i=1}^{L} \fall{n_i}{t_i} \over \fall{n}{t}} \fall{m}{t}  \sum_{\substack{\bfzero \preceq \m \preceq \n - \t: \\ |\m|=m - t}} {{\n - \t \choose \m} \over {n - t \choose m}}  \\
&=&     {\prod_{i=1}^{L} \fall{n_i}{t_i} \over \fall{n}{t}} \fall{m}{t},
\end{eqnarray*}
where the last equality follows because the term being summed is the probability mass function of a multivariate hypergeometric distribution parameterized by $\n-\t$, and the summation is over the entire domain of the distribution, and hence is 1.  \hfill{}\qed
\end{proof}


In the following fact, we compute some second moments of the hypergeometric distribution parameterized by $\n$ when restricted to those samples $\m$ which are non-zero at all types.


\begin{fact}    \label{fact:restricted_moment}
If $\n = (n_1, n_2, \ldots, n_L)$, where $n = |\n|$, and $1 \leq j \neq k \leq L$, then we have the following identities:
\begin{align}
    \sum_{\substack{\bfzero \prec \m \preceq \n: \\ |\m|=m}} {{\n \choose \m} \over {n \choose m}} m_j (m_j + 1) &= \sum_{\substack{T \subseteq [L]: \\ j \notin T}} (-1)^{|T|} \left[ {\fall{n_j}{2} \fall{m}{2} \over \fall{n-n_T}{2}} + {2 n_j m \over n - n_T} \right] {{n - n_T \choose m} \over {n \choose m}}     \label{id:second_moment_tweak} \\
    \sum_{\substack{\bfzero \prec \m \preceq \n: \\ |\m|=m}} {{\n \choose \m} \over {n \choose m}} m_j m_k &= \sum_{\substack{T \subseteq [L]: \\ j \notin T}} (-1)^{|T|} {m_j m_k \fall{m}{2} \over \fall{n-n_T}{2}} {{n - n_T \choose m} \over {n \choose m}} \label{id:second_moment_mixed}
\end{align}
\end{fact}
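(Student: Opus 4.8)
The plan is to reduce each identity to the unrestricted moment formula of Fact~\ref{fact:general_moment} via inclusion--exclusion over the set of types that are forced to be zero. Concretely, for a subset $T \subseteq [L]$ with $j \notin T$ (and, for \eqref{id:second_moment_mixed}, also with $k \notin T$), write $\n_{-T} = \n - \n_T$ and observe that summing the relevant statistic over all $\bfzero \preceq \m \preceq \n$ with $|\m| = m$ and $m_i = 0$ for all $i \in T$ is exactly the same as summing it over the hypergeometric distribution parameterized by $\n_{-T}$, up to the probability-reweighting factor ${n - n_T \choose m}\big/{n \choose m}$. Then the standard inclusion--exclusion identity
\[
\sum_{\substack{\bfzero \prec \m \preceq \n:\\ |\m|=m}} (\cdots) = \sum_{\substack{T \subseteq [L]:\\ \text{(constraints)}}} (-1)^{|T|} \sum_{\substack{\bfzero \preceq \m \preceq \n_{-T}:\\ |\m|=m}} (\cdots)
\]
expresses the restricted sum (over $\m$ strictly positive in every coordinate) in terms of unrestricted sums over subconfigurations. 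Here the inner statistic depends only on coordinates $j$ (and $k$), both of which are excluded from $T$, so these coordinates survive intact in $\n_{-T}$.

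For \eqref{id:second_moment_mixed} this is immediate: applying Fact~\ref{fact:general_moment} with $\t = \e_j + \e_k$ to the configuration $\n_{-T}$ gives
\[
\sum_{\substack{\bfzero \preceq \m \preceq \n_{-T}:\\ |\m|=m}} \frac{{\n_{-T} \choose \m}}{{n-n_T \choose m}}\, m_j m_k
= \frac{n_j n_k}{\fall{n-n_T}{2}}\,\fall{m}{2},
\]
since $\fall{m_j}{1}\fall{m_k}{1} = m_j m_k$ and $\fall{n_j}{1}\fall{n_k}{1} = n_j n_k$. Multiplying by $(-1)^{|T|}{n-n_T\choose m}/{n\choose m}$ and summing over $T$ (with $j, k \notin T$) gives exactly the right-hand side of \eqref{id:second_moment_mixed}. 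One subtlety to flag: the statement of \eqref{id:second_moment_mixed} only writes ``$j \notin T$'' in the summation constraint, but since the summand $m_j m_k$ vanishes identically whenever $k \in T$ (as $m_k = 0$ is then forced throughout the corresponding block of the inclusion--exclusion), the terms with $k \in T$ contribute nothing, so writing ``$j \notin T$'' versus ``$j, k \notin T$'' makes no difference; I would note this to keep the bookkeeping honest.

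For \eqref{id:second_moment_tweak}, the statistic is $m_j(m_j+1) = \fall{m_j}{2} + 2 m_j = \fall{m_j}{2} + 2\fall{m_j}{1}$. Applying Fact~\ref{fact:general_moment} to $\n_{-T}$ with $\t = 2\e_j$ and with $\t = \e_j$ respectively yields
\[
\sum_{\substack{\bfzero \preceq \m \preceq \n_{-T}:\\ |\m|=m}} \frac{{\n_{-T}\choose\m}}{{n-n_T\choose m}}\,\bigl(\fall{m_j}{2} + 2 m_j\bigr)
= \frac{\fall{n_j}{2}}{\fall{n-n_T}{2}}\,\fall{m}{2} + \frac{2 n_j}{n - n_T}\, m,
\]
and again multiplying by $(-1)^{|T|}{n-n_T\choose m}/{n\choose m}$ and summing over $T$ with $j\notin T$ gives the claimed right-hand side. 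The main obstacle, such as it is, is purely notational: one must be careful that the inclusion--exclusion is set up over subsets $T$ of the type-index set (forcing those types to be absent) rather than over something else, and that the reweighting factor ${n-n_T\choose m}/{n\choose m}$ is tracked consistently through both the restriction step and the application of Fact~\ref{fact:general_moment}; there is no real analytic difficulty once the combinatorial skeleton is in place.
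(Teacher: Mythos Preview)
Your proposal is correct and matches the paper's proof exactly: inclusion--exclusion over the set $T$ of types forced to vanish, followed by Fact~\ref{fact:general_moment} applied to $\n - \n_T$, with the decomposition $m_j(m_j+1) = \fall{m_j}{2} + 2m_j$ for \eqref{id:second_moment_tweak} and $\t = \e_j + \e_k$ for \eqref{id:second_moment_mixed}. The paper's own proof in fact sums over $T$ with $j,k \notin T$ for \eqref{id:second_moment_mixed}, so your remark on the $j \notin T$ versus $j,k \notin T$ discrepancy (and the implicit $m_j m_k \to n_j n_k$ correction on the right-hand side) is well placed.
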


\begin{proof}
Applying the inclusion-exclusion principle and using \factref{fact:general_moment}, the identity in \eqref{id:second_moment_tweak} can be obtained as
\begin{align*}
\sum_{\substack{\bfzero \prec \m \preceq \n: \\ |\m|=m}} {{\n \choose \m} \over {n \choose m}} m_j (m_j + 1)
&= \sum_{\substack{T \subseteq [L]: \\ j \notin T}} (-1)^{|T|} \Bigg[ \sum_{\substack{\bfzero \preceq \m \preceq \n - \n_T: \\ |\m|=m}} {{\n - \n_T \choose \m} \over {n - n_T \choose m}} \left( \fall{m_j}{2} + 2 m_j \right) \Bigg] {{n - n_T \choose m} \over {n \choose m}} \\
&= \sum_{\substack{T \subseteq [L]: \\ j \notin T}} (-1)^{|T|} \left[ {\fall{n_j}{2} \fall{m}{2} \over \fall{n-n_T}{2}} + {2 n_j m \over n - n_T} \right] {{n - n_T \choose m} \over {n \choose m}}.
\end{align*}
Similarly for \eqref{id:second_moment_mixed}, we have
\begin{align*}
\sum_{\substack{\bfzero \prec \m \preceq \n: \\ |\m|=m}} {{\n \choose \m} \over {n \choose m}} m_j m_k
&= \sum_{\substack{T \subseteq [L]: \\ j, k \notin T}} (-1)^{|T|} \Bigg[ \sum_{\substack{\bfzero \preceq \m \preceq \n - \n_T: \\ |\m|=m}} {{\n - \n_T \choose \m} \over {n - n_T \choose m}} m_j m_k \Bigg] {{n - n_T \choose m} \over {n \choose m}}\\
&= \sum_{\substack{T \subseteq [L]: \\ j, k \notin T}} (-1)^{|T|} {m_j m_k \fall{m}{2} \over \fall{n-n_T}{2}} {{n - n_T \choose m} \over {n \choose m}}.
\tag*{\hfill{}\qed}
\end{align*}
\end{proof}


\bibliographystyle{apt}
\bibliography{14145_refs}

\begin{thebibliography}{10}

\bibitem{ABT:book}
{\sc Arratia, A., Barbour, A.~D. and Tavar\'e, S.} (2003).
\newblock {\em Logarithmic Combinatorial Structures: A Probabilistic Approach}.
\newblock European Mathematical Society Publishing House, Switzerland.

\bibitem{bha:son:2011}
{\sc Bhaskar, A. and Song, Y.~S.} (2012).
\newblock Closed-form asymptotic sampling distributions under the coalescent
  with recombination for an arbitrary number of loci.
\newblock {\em Advances in Applied Probability,\/} {\bf 44,} 391--407.

\bibitem{ewe:1972}
{\sc Ewens, W.~J.} (1972).
\newblock The sampling theory of selectively neutral alleles.
\newblock {\em Theoretical Population Biology\/} {\bf 3,} 87--112.

\bibitem{fu:1995}
{\sc Fu, Y.-X.} (1995).
\newblock Statistical properties of segregating sites.
\newblock {\em Theoretical Population Biology\/} {\bf 48,} 172--197.

\bibitem{gri:2003}
{\sc Griffiths, R.~C.} (2003).
\newblock The frequency spectrum of a mutation, and its age, in a general
  diffusion model.
\newblock {\em Theoretical Population Biology\/} {\bf 64,} 241--251.

\bibitem{gri:les:2005}
{\sc Griffiths, R.~C. and Lessard, S.} (2005).
\newblock Ewens' sampling formula and related formulae: combinatorial proofs,
  extensions to variable population size and applications to ages of alleles.
\newblock {\em Theoretical Population Biology\/} {\bf 68,} 167--77.

\bibitem{gri:tav:1994a}
{\sc Griffiths, R.~C. and Tavar\'e, S.} (1994).
\newblock Ancestral inference in population genetics.
\newblock {\em Stat. Sci.\/} {\bf 9,} 307--319.

\bibitem{gri:tav:1994b}
{\sc Griffiths, R.~C. and Tavar\'e, S.} (1994).
\newblock Sampling theory for neutral alleles in a varying environment.
\newblock {\em Proc. R. Soc. London B.\/} {\bf 344,} 403--410.

\bibitem{hop:1984}
{\sc Hoppe, F.} (1984).
\newblock P\'olya-like urns and the {E}wens' sampling formula.
\newblock {\em J.~Math.\ Biol.\/} {\bf 20,} 91--94.

\bibitem{jen:son:2009:G}
{\sc Jenkins, P.~A. and Song, Y.~S.} (2009).
\newblock Closed-form two-locus sampling distributions: accuracy and
  universality.
\newblock {\em Genetics\/} {\bf 183,} 1087--1103.

\bibitem{jen:son:2010}
{\sc Jenkins, P.~A. and Song, Y.~S.} (2010).
\newblock An asymptotic sampling formula for the coalescent with recombination.
\newblock {\em Annals of Applied Probability\/} {\bf 20,} 1005--1028.
\newblock (Technical Report 775, Department of Statistics, University of
  California, Berkeley, 2009).

\bibitem{jen:son:2011:tpb}
{\sc Jenkins, P.~A. and Song, Y.~S.} (2011).
\newblock The effect of recurrent mutation on the frequency spectrum of a
  segregating site and the age of an allele.
\newblock {\em Theoretical Population Biology,\/} {\bf 80,} 158--173.

\bibitem{jen:son:2011:aap}
{\sc Jenkins, P.~A. and Song, Y.~S.} (2012).
\newblock Pad\'e approximants and exact two-locus sampling distributions.
\newblock {\em Annals of Applied Probability,\/} {\bf 22,} 576--607.
\newblock (Technical Report 793, Department of Statistics, University of
  California, Berkeley, 2010).

\bibitem{kin:1982:SPA}
{\sc Kingman, J. F.~C.} (1982).
\newblock The coalescent.
\newblock {\em Stochastic Processes and Their Applications\/} {\bf 13,}
  235--248.

\bibitem{kin:1982:JAP}
{\sc Kingman, J. F.~C.} (1982).
\newblock On the genealogy of large populations.
\newblock {\em Journal of Applied Probability\/} {\bf 19,} 27--43.

\bibitem{nac:cro:2000}
{\sc Nachman, M.~W. and Crowell, S.~L.} (2000).
\newblock Estimate of the mutation rate per nucleotide in humans.
\newblock {\em Genetics\/} {\bf 156,} 297--304.

\bibitem{pitman:1992}
{\sc Pitman, J.} (1992).
\newblock The two-parameter generalization of {E}wens' random partition
  structure.
\newblock {\em Technical report} 345.
\newblock Department of Statistics, U.C. Berkeley.

\bibitem{pitman:1995}
{\sc Pitman, J.} (1995).
\newblock Exchangeable and partially exchangeable random partitions.
\newblock {\em Probab. Th. Rel. Fields\/} {\bf 102,} 145--158.

\bibitem{ste:2001}
{\sc Stephens, M.} (2001).
\newblock Inference under the coalescent.
\newblock In {\em Handbook of Statistical Genetics}. ed. D.~Balding, M.~Bishop,
  and C.~Cannings.
\newblock Wiley, Chichester, UK pp.~213--238.

\bibitem{wri:1949}
{\sc Wright, S.} (1949).
\newblock Adaptation and selection.
\newblock In {\em Genetics, Paleontology and Evolution}. ed. G.~L. Jepson,
  E.~Mayr, and G.~G. Simpson.
\newblock Princeton University Press pp.~365--389.

\bibitem{yang:1994}
{\sc Yang, Z.} (1994).
\newblock Estimating the pattern of nucleotide substitution.
\newblock {\em Journal of Molecular Evolution\/} {\bf 39,} 105--111.

\end{thebibliography}

\end{document}